\newtheorem{theorem}{Theorem}
\newtheorem{proposition}{Proposition}
\newtheorem{lemma}{Lemma}
\newcommand{\ord}{{\rm{ord}}}
\begin{document}
\title[\text{the fundamental group of linear torus curves of maximal contact}]
{On the fundamental group of the complement of linear torus curves
of maximal contact}
\author{M. Kawashima}
%Normally smooth divisors{\it Draft: \today} 
%[M. Kawashima ]
\address{\vtop{
\hbox{Department of Mathematics}
\hbox{Tokyo University of science}
\hbox{wakamiya-cho 26, shinjuku-ku}
\hbox{Tokyo 162-0827 Japan}
\hbox{\rm{E-mail}: {\rm j1107702@ed.kagu.tus.ac.jp}}
}}
\keywords{Torus curve, maximal contact}
\subjclass[2000]{14H20,14H30,14H45}
\begin{abstract}
In this paper, we compute the fundamental group  of
the complement of linear torus curves of maximal contact
  and we show that
it is isomorphic that of generic linear torus curves.
% For the proof, we use the Zariski's pencil method.
\end{abstract}
\maketitle 
%%%%%%%%%%%%%%%%%%%%%%%%%%%%%%%%%%%%%
%%%%%%%%%%%%%%%%%%%%%%%%%%%%%%%%%%%%%
\section{introduction}
A plane curve $C\subset \Bbb P^2$ of degree $pq$ is called a 
{\em curve  of $(p,q)$ torus  type}  with $p>  q\ge  2$, 
if there is a defining polynomial $F$ of $C$
of the form  $F=F_{p}^q-F_{q}^p$,
where $F_p$, $F_q$ are homogeneous polynomials of $X,Y,Z$
of degree $p$ and $q$ respectively. %Put
%$C_p=\{F_p=0\}$ and $C_q=\{F_q=0\}$.
A singular point $P\in C$ is called {\em inner} if $P\in \{F_p=F_q=0\}$.
Otherwise, $P$ is called an {\em outer singularity}.
$C$ is called {\em tame} if $C$ has no outer singularity.

We say that a $(p,q)$ torus curve $C$ is
{\em{a torus curve of a maximal contact}} 
if $\{F_p=F_q=0\}=\{\xi_0\}$ and $\{F_p=0\}$ is smooth at $\xi_0$.
Then the intersection multiplicity $I(F_q,F_p;\xi_0)$ is 
equal to $pq$ by B\'ezout's theorem and singularity $(C,\xi_0)$ 
is topologically equivalent to
the Brieskorn-Pham singularity $B_{p^2q,q}$
where $B_{m,n}:=\{(x,y)\in \Bbb{C}^2\,|\, x^m+y^n=0\}$.

We are interested in topological invariants
the Alexander polynomial $\Delta_C(t)$ and
the fundamental groups
$\pi_1(\Bbb{P}^2\setminus C)$ and
$\pi_1(\Bbb{C}^2\setminus C)$.
It is known that 
if  $C$ is a tame maximal contact of type $(p,q)$,
then the Alexander polynomial of $C$ is equal to
\[
 \Delta_{C}(t)=\dfrac{(t^{pq/r}-1)^r(t-1)}{(t^p-1)(t^q-1)}
\]
where $r={\text{gcd$(p,q)$}}$ (\cite{BenoitTu,BenoitTuCorrect}).   
%Let $C_1$ and $C_2$ be reduced plane curves of degree $d_1$ and $d_2$
%respectively which intersect transversely and
%we put  $C=C_1\cup C_2$.
%Take a generic line $L_{\infty}$ for $C$ and
%consider the corresponding affine space
%$\Bbb{C}^2=\Bbb{P}^2\setminus L_{\infty}$.
%Then M. Oka and  K. Sakamoto showed
%{\em the Product formula} (\cite{MR81h:14019}):
% \[
% \pi_1(\Bbb{C}^2\setminus C) \cong
% \pi_1(\Bbb{C}^2\setminus C_1) \times
% \pi_1(\Bbb{C}^2\setminus C_2).
% \]

We consider the special class of torus curves of type $(p,q)$.
We say that  $C$ is
{\em{a linear torus curve of type $(p,q)$}}
if  $F_q=L^q$ for some linear form $L$.
If $C$ is a linear torus curve of type $(p,q)$,
then $C$ generically consists of $q$ components of curves of degree $p$
as
 \[
F=\prod_{j=1}^{q}(F_p-\zeta^jL^p),\quad {\text{where}}\quad
\zeta:=\exp \left(\frac{2\pi \sqrt{-1}}{q}\right)
      \]
      and the inner singularities of $C$  are contained in
the intersection  $C_p \cap L$.
If $C_p\cap L$ consists of $p$ distinct points,
 $C$ is called {\em a generic linear torus curve}.
 In this case, $C$ has $p$ singular points, each of which is
topologically equivalent to $B_{pq,q}$.  

If $C$ is a generic linear torus curve of type $(p,q)$,
then $C$ has $q$ components of degree $p$
which are defined by
$C^{(j)}:\{F_p-\zeta^jL^p=0\}$ for $j=1,\dots,q$.
Then $C^{(i)}$ and $C^{(j)}$ 
 are tangent at $p$ points
$\{F_p=L=0\}$
with the intersection multiplicity $p$
each other.
The fundamental group is given by
\[
 \pi_1(\Bbb{P}^2\setminus C)\cong
 F(q-1)* \Bbb{Z}/p\Bbb{Z}
\]
where $F(d)$ is the free group of rank $d$ (\cite{Okacertain}).
Its  Alexander polynomial $\Delta_C(t)$ is given by
 \[
   \Delta_C(t)=
   \dfrac{(t^{pq}-1)^{q-1}(t-1)}{t^q-1}.
 \]
 
 Let $C$ be a tame $(p,q)$ linear torus curve of a maximal contact.
Then $C$ has $q$ components of degree $p$
which intersect at  one point with intersection multiplicity $p^2$ each other.
  In this paper,
we compute that fundamental groups of $\Bbb{P}^2\setminus C$ and
$\Bbb{C}^2\setminus C$.

Our main result is the following:
 \begin{theorem}
   Let $C$ be a tame $(p,q)$ linear torus curve of a maximal contact.
  Then the fundamental group $\pi(\Bbb{P}^2\setminus C)$
  is isomorphic to  that of generic linear torus curves.
  Namely
 \[\begin{split}
& \pi(\Bbb{C}^2\setminus C)\cong
\langle g_1,\dots,g_q,\omega \,|\,\omega=g_1\cdots g_q,[g_j,\omega^p]=e,j=1,\dots,q    \rangle\\
& \pi(\Bbb{P}^2\setminus C)\cong
\langle g_1,\dots,g_q,\omega \,|\,\omega^p=e,\
    \omega=g_1\cdots g_q \rangle
 \cong  F(q-1)* \Bbb{Z}/p\Bbb{Z}.
\end{split}
\]
 \end{theorem}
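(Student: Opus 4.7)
The plan is to apply the Zariski--van Kampen method to the pencil of lines through the unique singular point $\xi_0$, following Oka's strategy in \cite{Okacertain} for the generic linear torus curve. I choose affine coordinates so that $\xi_0=(0,0)$ and the common tangent of all components $C^{(j)}$ at $\xi_0$ is the line $L=\{Y=0\}$; the maximal contact condition yields the normal form $F_p(X,Y,1)=cX^p+Y\,g(X,Y)$ with $g(0,0)\neq 0$, and each $C^{(j)}$ is smooth at $\xi_0$ and tangent to $L$ there.

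Take the pencil $\ell_s:Y=sX$ with base point $\xi_0$. A generic fiber $\ell_{s_0}$ meets each $C^{(j)}$ transversely at $\xi_0$ (multiplicity one) and at $p-1$ further simple points, so $\pi_1(\ell_{s_0}\setminus C)$ is free on $q(p-1)+1$ generators: a meridian $\omega$ around $\xi_0$ and meridians $y^{(j)}_i$ ($1\le j\le q,\,1\le i\le p-1$) at the other intersection points. Perturbing $\ell_{s_0}$ slightly away from $\xi_0$ splits the crossing at $\xi_0$ into $q$ transverse intersections, one per component, and produces the identification $\omega=g_1g_2\cdots g_q$, where $g_j$ is a meridian of $C^{(j)}$.

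The singular values of $s$ fall into two types: (i) ordinary tangencies where $\ell_s$ is tangent to a single $C^{(j)}$ at a smooth point away from $\xi_0$, and (ii) the degenerate value $s=\infty$ corresponding to $L$ itself. Monodromies around fibers of type (i) produce braid-type relations that collapse the $p-1$ meridians $y^{(j)}_1,\ldots,y^{(j)}_{p-1}$ (for fixed $j$) to the single generator $g_j$, exactly as in Oka's analysis of the generic case. The decisive input is the monodromy around $L$: here all $pq$ intersection points coalesce at the $B_{p^2q,q}$-singularity of $C$. An iterated blow-up resolution of $\xi_0$ allows one to compute the local contribution; the resulting relations are the commutators $[g_j,\omega^p]=e$, reflecting that $\omega^p$ is the canonical central element in the link group of the Brieskorn--Pham singularity $B_{p^2q,q}$. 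This yields $\pi_1(\Bbb{C}^2\setminus C)=\langle g_1,\ldots,g_q,\omega\mid\omega=g_1\cdots g_q,\,[g_j,\omega^p]=e\rangle$.

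Passing to $\Bbb{P}^2$ imposes the relation at infinity: since $C$ has total degree $pq$ and every component has degree $p$, the meridian of the line at infinity is conjugate to $\omega^p$, yielding $\omega^p=e$; the commutators $[g_j,\omega^p]=e$ then become redundant, and a standard Tietze transformation identifies the resulting quotient with $F(q-1)*\Bbb{Z}/p\Bbb{Z}$. The main technical obstacle will be step (ii): the monodromy computation around $L$ requires an explicit iterated blow-up of the $B_{p^2q,q}$ singularity and careful bookkeeping to match the induced van Kampen relations to the link-group presentation of the Brieskorn--Pham singularity.
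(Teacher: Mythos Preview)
Your approach differs fundamentally from the paper's in the choice of pencil: the paper takes horizontal lines $y=s$ with base point $B_0=[1:0:0]$ \emph{off} the curve, while you place the pencil base point at the singular point $\xi_0\in C$ itself. This creates gaps that your outline does not close.

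First, the Zariski--van Kampen theorem as invoked in the paper (Proposition~1) requires the pencil base point to lie outside $C$; with $\xi_0\in C$ you must blow up and track the exceptional divisor, and since all $q$ branches share the tangent $L$ at $\xi_0$ with mutual contact of order $p^2$, a single blow-up does not separate them, so this is far from a routine adjustment. Second, the identity $\omega=g_1\cdots g_q$ cannot come from ``perturbing $\ell_{s_0}$ away from $\xi_0$'': the generic fiber $\ell_{s_0}$ meets $C$ at the single point $\xi_0$ (plus the $q(p-1)$ other points), so $\pi_1(\ell_{s_0}\setminus C)$ has exactly one generator $\omega$ there, and the individual branch meridians $g_j$ are not elements of that free group at all---there is nothing for a monodromy relation to equate $\omega$ to. Third, you defer the monodromy around $L$ (the fiber where everything coalesces) to an unspecified iterated blow-up, but this is precisely the core of the computation. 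The paper handles the analogous step (monodromy at $y=0$) explicitly by realizing $C=\varphi_p^{-1}(D)$ under the $p$-fold cover $\varphi_p(x,y)=(x^p,y)$ and lifting the much simpler braid monodromy of the degree-$q$ curve $D$; this covering trick, together with explicit Puiseux parametrizations, is what makes the relations $(1\text{-}2)$ and $(2\text{-}3)$ tractable and then reducible to $[a_{0,j},\omega^p]=e$, and it has no counterpart in your plan. (A minor slip: with $\ell_s:Y=sX$, the tangent line $L=\{Y=0\}$ sits at $s=0$, not $s=\infty$.)
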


\section{Preliminaries}
\subsection{Van Kampen-Zariski Pencil method}
Let $C$ be a reduced plane curve of degree $d$ in $\Bbb{P}^2$.
To compute the fundamental groups
$\pi_1(\Bbb{P}^2\setminus C)$ and $\pi_1(\Bbb{C}^2\setminus C)$,
we use so-called {\em{Van Kampen-Zariski pencil method}}.
We recall it briefly in  the following (\cite{OkaSurvey}).
We fix a point $B_{0}\in \Bbb{P}^2\setminus C$ and
we consider the set of lines
$\mathcal{L}=\{L_{s}\,|\,s\in \Bbb{P}^1\}$ through $B_0$ and
$\mathcal{L}$ is called a {\em pencil}. 
Taking a linear change of coordinates if necessary,
we may assume that $B_{0}=[1:0:0]$ and $L_{s}$ is defined by 
$L_{s}=\{Y-s Z=0\}$ in $\Bbb{P}^2$ where $(X,Y,Z)$ is the fixed  
homogeneous coordinates.
Take $L_{\infty}=\{Z=0\}$ as the line at infinity and 
assume that $L_{\infty}$ intersects transversely with $C$.
We consider the affine coordinates $(x,y)=(X/Z,Y/Z)$ on
$\Bbb{C}^2=\Bbb{P}^2-L_{\infty}$.
Let $F(X,Y,Z)$ be the defining homogeneous polynomial of $C$ and 
let $f(x,y)=F(x,y,1)$ be the affine equation of $C$.
We use the following notations:
\[
  C^a=C\cap \Bbb{C}^2,\quad L^a_{s}=L_{s}\cap \Bbb{C}^2.
\] 
%Hearafter
We identify $L_{s}$ and $L^a_{s}$ with
$\Bbb{P}^1$ and $\Bbb{C}$ respectively and
the pencil line  $L_{s}^a$ is defined by 
$\{y=s\}$ in the affine coordinates $(x,y)$.
We use $x$ as the coordinates of $L_{s}^a$.

A pencil line $L_{s}$ is called
{\em{singular with respect to $C$}}
if $L_{s}$ passes through a singular point of $C$ or
   $L_{s}$ is tangent to $C$.
Otherwise, we call $L_{s}$ is {\em{generic}}. 
Hearafter we assume that $L_{\infty}$ is generic and
          $B_{0}$ is not contained in $C$.

Let $\Bbb{C}_y$ be the space of
the parameters of the pencil with
coordinates $y$ and
let $\Sigma=\{ s\in \Bbb{C}_y\,|\,
{\text{$L_{s}$ is a singular pencil lines}}\}$
and suppose that 
$\Sigma=\{s_1,\dots, s_k \}\subset  \Bbb{C}_y$. 
We fix a generic pencil line $L_{s_0}$ (so $s_0\in \Bbb{C}_y\setminus
\Sigma$)
 and put 
$L^a_{s_0}\cap C^a=\{Q_1,\dots,Q_{d}\}$
where $d$ is the degree of $C$.
We take a base point $*_0\in L^a_{s_0}\setminus L^a_{s_0}\cap C^a$ 
on the real axis which is sufficiently near to 
$B_0$ and $*_0\ne B_0$.
We take a large disk $\Delta_{R}\subset L^a_{s_0}$
such that $L^a_{s_0}\cap C^a \subset \Delta_{R}$ and
$*_{0}\not \in \Delta_{R}$.
We may assume that
$\Delta_{R}=\{(x,s_0)\in L_{s_0}^a\,|\,|x|\le R\}$ with a
sufficient  large $R$.
We orient the boundary of $\Delta_{R}$
counter-clockwise and
we put $\Xi=\partial\Delta_{R}$.  
Join the circle $\Xi$ to the base point by a line segment $L$ connecting $*_0$ and 
     $\Xi$ along the real axis.
Let $\Omega$ be the class of this loop 
$L\circ \Xi \circ L^{-1}$ in
$\pi_1(L^a_{s_0}\setminus L^a_{s_0}\cap C;*_0)$.
We take free generators $g_1,\dots,g_d$ of 
$\pi_1(L^a_{s_0}\setminus L^a_{s_0}\cap C;*_0)$ so that 
$g_i$ goes around $Q_i$ counter-clockwise along a small circle and 
 we assume that
$\omega=g_d\cdots g_1$, taking a suitable ordering
of $g_1,\dots, g_d$ if necessary.

\begin{center}
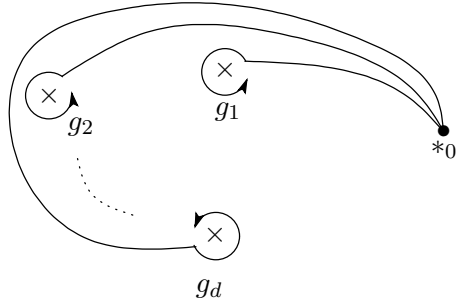
\begin{figure}[H]
\centering
%WinTpicVersion3.08
\unitlength 0.1in
\begin{picture}( 24.2600, 14.1200)( 21.1000,-25.2500)
% STR 2 0 3 0
% 3 3392 1387 3392 1467 5 0
% $\times$
\put(33.9200,-14.6700){\makebox(0,0){$\times$}}%
% STR 2 0 3 0
% 3 2470 1520 2470 1600 5 0
% $\times$
\put(24.7000,-16.0000){\makebox(0,0){$\times$}}%
% STR 2 0 3 0
% 3 3340 2250 3340 2330 5 0
% $\times$
\put(33.4000,-23.3000){\makebox(0,0){$\times$}}%
% CIRCLE 2 0 3 0
% 4 3392 1475 3499 1419 3499 1419 3562 1540
% 
\special{pn 8}%
\special{ar 3392 1476 122 122  0.3652014 5.8010213}%
% SARROW 2 0 3 1
% 2 3500 1529 3505 1517
% 
\special{pn 8}%
\special{pa 3500 1530}%
\special{pa 3506 1518}%
\special{fp}%
\special{sh 1}%
\special{pa 3506 1518}%
\special{pa 3462 1572}%
\special{pa 3484 1566}%
\special{pa 3498 1586}%
\special{pa 3506 1518}%
\special{fp}%
% STR 2 0 3 0
% 3 4536 1707 4536 1787 5 0
% $\bullet$
\put(45.3600,-17.8700){\makebox(0,0){$\bullet$}}%
% SPLINE 2 0 3 0
% 4 4536 1787 4376 1627 3496 1419 3496 1419
% 
\special{pn 8}%
\special{pa 4536 1788}%
\special{pa 4514 1764}%
\special{pa 4492 1740}%
\special{pa 4470 1716}%
\special{pa 4448 1692}%
\special{pa 4426 1670}%
\special{pa 4402 1650}%
\special{pa 4378 1628}%
\special{pa 4354 1610}%
\special{pa 4328 1592}%
\special{pa 4302 1576}%
\special{pa 4276 1560}%
\special{pa 4248 1546}%
\special{pa 4220 1532}%
\special{pa 4192 1520}%
\special{pa 4162 1510}%
\special{pa 4134 1498}%
\special{pa 4104 1490}%
\special{pa 4072 1482}%
\special{pa 4042 1474}%
\special{pa 4010 1466}%
\special{pa 3978 1460}%
\special{pa 3946 1454}%
\special{pa 3914 1450}%
\special{pa 3882 1444}%
\special{pa 3848 1440}%
\special{pa 3816 1438}%
\special{pa 3782 1434}%
\special{pa 3748 1432}%
\special{pa 3714 1430}%
\special{pa 3680 1428}%
\special{pa 3646 1426}%
\special{pa 3612 1424}%
\special{pa 3576 1422}%
\special{pa 3542 1422}%
\special{pa 3508 1420}%
\special{pa 3496 1420}%
\special{sp}%
% CIRCLE 2 0 3 0
% 4 2470 1600 2541 1502 2541 1502 2652 1584
% 
\special{pn 8}%
\special{ar 2470 1600 122 122  6.1954987 6.2831853}%
\special{ar 2470 1600 122 122  0.0000000 5.3393627}%
% SARROW 2 0 3 1
% 2 2591 1601 2589 1588
% 
\special{pn 8}%
\special{pa 2592 1602}%
\special{pa 2590 1588}%
\special{fp}%
\special{sh 1}%
\special{pa 2590 1588}%
\special{pa 2580 1658}%
\special{pa 2598 1642}%
\special{pa 2620 1652}%
\special{pa 2590 1588}%
\special{fp}%
% SPLINE 2 0 3 0
% 6 4536 1787 4424 1619 3784 1299 2984 1267 2540 1497 2540 1497
% 
\special{pn 8}%
\special{pa 4536 1788}%
\special{pa 4520 1760}%
\special{pa 4504 1732}%
\special{pa 4486 1704}%
\special{pa 4468 1678}%
\special{pa 4450 1652}%
\special{pa 4430 1628}%
\special{pa 4410 1604}%
\special{pa 4388 1580}%
\special{pa 4366 1558}%
\special{pa 4342 1538}%
\special{pa 4318 1518}%
\special{pa 4294 1500}%
\special{pa 4266 1482}%
\special{pa 4240 1466}%
\special{pa 4212 1450}%
\special{pa 4184 1436}%
\special{pa 4154 1422}%
\special{pa 4126 1408}%
\special{pa 4094 1394}%
\special{pa 4064 1382}%
\special{pa 4034 1372}%
\special{pa 4002 1360}%
\special{pa 3970 1350}%
\special{pa 3938 1340}%
\special{pa 3904 1330}%
\special{pa 3872 1322}%
\special{pa 3840 1314}%
\special{pa 3806 1304}%
\special{pa 3774 1296}%
\special{pa 3740 1290}%
\special{pa 3708 1282}%
\special{pa 3674 1274}%
\special{pa 3642 1268}%
\special{pa 3608 1262}%
\special{pa 3576 1256}%
\special{pa 3544 1250}%
\special{pa 3510 1246}%
\special{pa 3478 1240}%
\special{pa 3446 1236}%
\special{pa 3412 1234}%
\special{pa 3380 1230}%
\special{pa 3348 1228}%
\special{pa 3316 1228}%
\special{pa 3284 1228}%
\special{pa 3252 1228}%
\special{pa 3220 1228}%
\special{pa 3190 1230}%
\special{pa 3158 1234}%
\special{pa 3128 1236}%
\special{pa 3096 1242}%
\special{pa 3066 1248}%
\special{pa 3036 1254}%
\special{pa 3006 1262}%
\special{pa 2976 1270}%
\special{pa 2946 1280}%
\special{pa 2916 1292}%
\special{pa 2888 1302}%
\special{pa 2858 1316}%
\special{pa 2830 1328}%
\special{pa 2802 1344}%
\special{pa 2772 1358}%
\special{pa 2744 1374}%
\special{pa 2716 1390}%
\special{pa 2688 1406}%
\special{pa 2660 1422}%
\special{pa 2632 1440}%
\special{pa 2604 1456}%
\special{pa 2576 1474}%
\special{pa 2550 1492}%
\special{pa 2540 1498}%
\special{sp}%
% CIRCLE 2 0 3 0
% 4 3344 2336 3231 2384 3231 2384 3181 2257
% 
\special{pn 8}%
\special{ar 3344 2336 124 124  3.5928952 6.2831853}%
\special{ar 3344 2336 124 124  0.0000000 2.7399094}%
% SARROW 2 0 3 1
% 2 3240 2274 3236 2285
% 
\special{pn 8}%
\special{pa 3240 2274}%
\special{pa 3236 2286}%
\special{fp}%
\special{sh 1}%
\special{pa 3236 2286}%
\special{pa 3278 2230}%
\special{pa 3254 2236}%
\special{pa 3240 2216}%
\special{pa 3236 2286}%
\special{fp}%
% SPLINE 2 0 3 0
% 9 4536 1787 4504 1619 4088 1299 2808 1155 2360 1371 2280 1851 2650 2341 3240 2391 3240 2391
% 
\special{pn 8}%
\special{pa 4536 1788}%
\special{pa 4532 1754}%
\special{pa 4528 1722}%
\special{pa 4524 1690}%
\special{pa 4516 1660}%
\special{pa 4508 1630}%
\special{pa 4496 1600}%
\special{pa 4482 1574}%
\special{pa 4466 1548}%
\special{pa 4448 1524}%
\special{pa 4428 1500}%
\special{pa 4406 1478}%
\special{pa 4380 1458}%
\special{pa 4356 1438}%
\special{pa 4328 1418}%
\special{pa 4300 1400}%
\special{pa 4270 1384}%
\special{pa 4240 1368}%
\special{pa 4208 1352}%
\special{pa 4176 1338}%
\special{pa 4144 1324}%
\special{pa 4112 1310}%
\special{pa 4080 1296}%
\special{pa 4048 1284}%
\special{pa 4016 1272}%
\special{pa 3984 1260}%
\special{pa 3952 1248}%
\special{pa 3920 1236}%
\special{pa 3888 1226}%
\special{pa 3858 1216}%
\special{pa 3826 1206}%
\special{pa 3794 1198}%
\special{pa 3762 1188}%
\special{pa 3732 1180}%
\special{pa 3700 1172}%
\special{pa 3668 1166}%
\special{pa 3638 1158}%
\special{pa 3606 1152}%
\special{pa 3574 1146}%
\special{pa 3544 1140}%
\special{pa 3512 1136}%
\special{pa 3480 1132}%
\special{pa 3450 1128}%
\special{pa 3418 1124}%
\special{pa 3386 1122}%
\special{pa 3354 1118}%
\special{pa 3324 1116}%
\special{pa 3292 1116}%
\special{pa 3260 1114}%
\special{pa 3228 1114}%
\special{pa 3196 1114}%
\special{pa 3164 1114}%
\special{pa 3134 1116}%
\special{pa 3102 1118}%
\special{pa 3070 1120}%
\special{pa 3036 1122}%
\special{pa 3004 1126}%
\special{pa 2972 1128}%
\special{pa 2940 1134}%
\special{pa 2908 1138}%
\special{pa 2874 1144}%
\special{pa 2842 1150}%
\special{pa 2808 1156}%
\special{pa 2776 1162}%
\special{pa 2742 1170}%
\special{pa 2710 1178}%
\special{pa 2676 1188}%
\special{pa 2644 1198}%
\special{pa 2612 1208}%
\special{pa 2580 1220}%
\special{pa 2550 1232}%
\special{pa 2520 1246}%
\special{pa 2492 1262}%
\special{pa 2466 1278}%
\special{pa 2440 1296}%
\special{pa 2414 1316}%
\special{pa 2392 1336}%
\special{pa 2372 1358}%
\special{pa 2352 1382}%
\special{pa 2334 1408}%
\special{pa 2320 1436}%
\special{pa 2306 1464}%
\special{pa 2294 1494}%
\special{pa 2284 1524}%
\special{pa 2276 1556}%
\special{pa 2270 1588}%
\special{pa 2266 1622}%
\special{pa 2264 1656}%
\special{pa 2262 1690}%
\special{pa 2264 1724}%
\special{pa 2266 1758}%
\special{pa 2270 1792}%
\special{pa 2276 1826}%
\special{pa 2282 1860}%
\special{pa 2292 1894}%
\special{pa 2302 1926}%
\special{pa 2312 1958}%
\special{pa 2326 1990}%
\special{pa 2340 2022}%
\special{pa 2356 2052}%
\special{pa 2372 2080}%
\special{pa 2390 2110}%
\special{pa 2410 2136}%
\special{pa 2430 2162}%
\special{pa 2450 2188}%
\special{pa 2472 2212}%
\special{pa 2496 2236}%
\special{pa 2520 2256}%
\special{pa 2546 2276}%
\special{pa 2570 2296}%
\special{pa 2598 2312}%
\special{pa 2624 2328}%
\special{pa 2652 2342}%
\special{pa 2680 2354}%
\special{pa 2710 2366}%
\special{pa 2740 2374}%
\special{pa 2770 2382}%
\special{pa 2800 2388}%
\special{pa 2832 2394}%
\special{pa 2862 2398}%
\special{pa 2894 2400}%
\special{pa 2926 2402}%
\special{pa 2958 2404}%
\special{pa 2992 2404}%
\special{pa 3024 2404}%
\special{pa 3058 2404}%
\special{pa 3092 2402}%
\special{pa 3124 2400}%
\special{pa 3158 2398}%
\special{pa 3192 2396}%
\special{pa 3226 2392}%
\special{pa 3240 2392}%
\special{sp}%
% STR 2 0 3 0
% 3 3408 1603 3408 1683 5 0
% $g_1$
\put(34.0800,-16.8300){\makebox(0,0){$g_1$}}%
% STR 2 0 3 0
% 3 2640 1680 2640 1760 5 0
% $g_2$
\put(26.4000,-17.6000){\makebox(0,0){$g_2$}}%
% STR 2 0 3 0
% 3 3310 2530 3310 2610 5 0
% $g_d$
\put(33.1000,-26.1000){\makebox(0,0){$g_d$}}%
% STR 2 0 3 0
% 3 4540 1800 4540 1880 5 0
% $*_0$
\put(45.4000,-18.8000){\makebox(0,0){$*_0$}}%
% SPLINE 2 2 3 0
% 4 2620 1930 2683 2102 2943 2235 2943 2235
% 
\special{pn 8}%
\special{pa 2620 1930}%
\special{pa 2628 1964}%
\special{pa 2636 1996}%
\special{pa 2646 2028}%
\special{pa 2658 2056}%
\special{pa 2672 2084}%
\special{pa 2688 2110}%
\special{pa 2708 2130}%
\special{pa 2732 2150}%
\special{pa 2758 2166}%
\special{pa 2786 2182}%
\special{pa 2816 2194}%
\special{pa 2848 2206}%
\special{pa 2882 2216}%
\special{pa 2914 2228}%
\special{pa 2944 2236}%
\special{sp -0.045}%
\end{picture}%
\vspace{0.4cm}
 \caption{$L_{s_0}\cap C$}
\end{figure}
\end{center}
Hereafter
we denote a small lasso oriented in the counter clockwise
direction by a bullet with a path in the following figures.
Thus
${\vrule height3pt depth-2.3pt width1cm}\hspace{-0.09cm} \bullet$ 
%$-\hspace{-0.09cm}\bullet$ 
indicates
 ${\vrule height3pt depth-2.3pt width1cm}\hspace{-0.11cm}  \circlearrowleft$.

The fundamental group $\pi_1(\Bbb{C}_{y}\setminus \Sigma;s_0)$ acts on 
$\pi_1(L^a_{s_0}\setminus L^a_{s_0}\cap C;*_0)$ which we call  
{\em{the monodromy action}} of $\pi_1(\Bbb{C}_{y}\setminus
\Sigma;s_0)$. For details, we refer to  \cite{OkaSurvey,OkaTwo}.
Note that
$\pi_1(L^a_{s_0}\setminus L^a_{s_0}\cap C;*_0)$
is a free group of rank $d$ with generators $g_1,\dots,g_d$.
The class of the action
of $\sigma\in \pi_1(\Bbb{C}_y\setminus \Sigma;s_0)$
on $g\in \pi_1(L^a_{s_0}\setminus L^a_{s_0}\cap C;*_0)$ is
denoted by $g^{\sigma}$. 
%We recall this action quickly.

%Let $p:\Bbb{C}^2\setminus C^a\to \Bbb{C}_y$
%be the second projection.
%Then the restriction $p:p^{-1}(\Bbb{C}_y\setminus \Sigma)\to
%\Bbb{C}_y\setminus \Sigma$ is a  locally trivial fibration whose fiber
%is identified with $\Bbb{C}\setminus \{{\text{$d$ points}}\}$.

%Take a large desk $\Delta \subset \Bbb{C}_y$ such that
%$\Sigma \subset \Delta $ and $\eta\in \Delta$.
%Hence we have
%$\pi_1(\Bbb{C}_y\setminus \Sigma;\eta_0)\cong
% \pi_1(\Delta\setminus \Sigma;\eta_0)$.
%Moreover we take a large disk $B(R)$ where $R$ is its radius such that
%$B(R)\supset \bigcup_{\eta\in \Delta}C^a\cap L^a_{\eta}$.
%We may assume that $*_0\in L_{\eta_0}\setminus B(2R)$.
%Take $g=[\alpha]\in \pi_1(L^a_{\eta_0}\setminus C^a\cap L^a_{\eta};*_0)$ and
%     $\sigma=[\tau]\in \pi_1(\Bbb{C}_y\setminus \Sigma;\eta_0)$.
% We can construct a one-parameter family
% $h_t:(L_{\eta_0},C\cap L_{\eta_0})\to (L_{\tau(t)},C\cap L_{\tau(t)})$
% such that the composition
% \[
%  \Bbb{C}\to L^a_{\eta_0}\overset{h_{t}}{\longrightarrow}
%  L^a_{\tau(t)}\to \Bbb{C}
%   \]
%is identity on $\Bbb{C}\setminus B(2R)$.
%The action of $\sigma$ on $g$ is defined by
%$(g,\sigma)=[h_1\circ \alpha]$.
%The class of the action
%of $\sigma\in \pi_1(\Bbb{C}_y\setminus \Sigma;\eta_0)$
%on $g\in \pi_1(L^a_{\eta_0}\setminus L^a_{\eta_0}\cap C;*_0)$ is
%denoted by $g^{\sigma}$. 
%Note that $w^{\sigma}=w$ for any $\sigma \in \pi_1(\Bbb{C}_y\setminus \Sigma;\eta_0)$.

Let $\mathcal{M}$ be the normal subgroup of
$\pi_1(L^a_{s_0}\setminus L^a_{s_0}\cap C;*_0)$
which is normally generated  by 
\[\mathcal{R}=\langle g^{-1}g^{\sigma}\,|\,g\in
\pi_1(L^a_{s_0}\setminus L^a_{s_0}\cap C;*_0),\ \sigma
\in \pi_1(\Bbb{C}_y\setminus \Sigma;s_0)\rangle\]
and we call $\mathcal{M}$ the {\em{the group of the monodromy relations}}. 
Put
 \[
\mathcal{M}(\sigma_i)
=\{g_j^{-1}g_j^{\sigma_i}\,|\,j=1,\dots,d\}.
\]
Then the group $\mathcal{M}$
is the minimal normal subgroup of
$\pi_1(L^a_{s_0}\setminus L^a_{s_0}\cap C;*_0)$
generated by $\bigcup_{j=1}^{k}\mathcal{M}(\sigma_i)$.
By the definition, we have the relation
\[
 \tag*{$R(\sigma_i)$} g_{j}=g_{j}^{\sigma_i}
\]
in the quotient group
$\pi_1(L^a_{s_0}\setminus L^a_{s_0}\cap C;*_0)/\mathcal{M}$.
We call $R(\sigma_i)$ {\em{the monodromy relation for $\sigma_i$}}.
Let $j:L^a_{s_0}\setminus L^a_{s_0}\cap C\to \Bbb{C}^2\setminus C^a$  and 
$\iota:\Bbb{C}^2\setminus C^a\to \Bbb{P}^2\setminus C$
be the respective inclusions.
\begin{proposition}[\cite{Za1,Za4,Kampen1}]
Under the above situations,
\begin{enumerate}
\item The canonical homomorphism
$j_{\sharp}:\pi_1(L^a_{s_0}\setminus L^a_{s_0}\cap C;*_0)\to
\pi_1(\Bbb{C}^2\setminus C^a;*_0)$ is surjective and the kernel
${\rm{Ker}}j_{\sharp}$ is equal to $\mathcal{M}$.
      Thus we have the isomorphism:
\[
  \begin{split}
 &\pi_1(\Bbb{C}^2\setminus C^a;*_0)\cong
 \pi_1(L^a_{s_0}\setminus L^a_{s_0}\cap C;*_0)
                                             /\mathcal{M}.
  \end{split}
\]
\item $($\cite{OkaCentral}$)$
     The canonical homomorphism
$\iota_{\sharp}:\pi_1(\Bbb{C}^2\setminus C^a;*_0)\to
      \pi_1(\Bbb{P}^2\setminus C;*_0)$ is surjective and 
the kernel  ${\rm{Ker}}\iota_{\sharp}$ is  generated by a single element
      $\omega=g_d\cdots g_1$ which is in the center of
      $\pi(\Bbb{C}^2\setminus C^a)$
and ${\rm{Ker}}\iota_{\sharp}=\langle \omega \rangle\cong \Bbb{Z}$.
      Thus we have an isomorphism
      \[
   \pi_1(\Bbb{P}^2\setminus C;*_0)\cong
      \pi_1(\Bbb{C}^2\setminus C^{a};*_0)/\langle \omega \rangle
      \]
\end{enumerate}
\end{proposition}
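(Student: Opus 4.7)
The plan is to apply the Van Kampen--Zariski pencil method of Proposition 1. Since $\pi_1(\Bbb P^2\setminus C)$ is a topological invariant of the pair $(\Bbb P^2,C)$, and all tame $(p,q)$ linear torus curves of maximal contact share the same topological type (one singularity of class $B_{p^2q,q}$ at $\xi_0$, no outer singularities), we may normalize to the model $F_p = YZ^{p-1}+X^p$, $L = Y$; then the affine equation $f(x,y) = (y+x^p)^q - y^{pq} = \prod_{j=1}^q (y + x^p - \zeta^j y^p)$ exhibits the $q$ smooth components $C^{(j)}$ all tangent to $\{y=0\}$ at $\xi_0=(0,0)$, and the substitution $Z=0$ gives $F\big|_{L_\infty}=X^{pq}-Y^{pq}$, showing $L_\infty$ meets $C$ transversely in $pq$ points. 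Take $B_0 = [1:0:0]$, pencil $L_s = \{y=s\}$, and a small $s_0>0$ as base pencil line; label the free generators of $\pi_1(L_{s_0}^a\setminus C)$ as $\{h_{j,k}\}_{1 \le j \le q,\ 0 \le k \le p-1}$, with $h_{j,k}$ the meridian of the intersection point near $\zeta_p^k(-s_0)^{1/p}$ on $C^{(j)}$.

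Solving $\partial f/\partial x = qpx^{p-1}(y+x^p)^{q-1}=0$ on $C$ identifies the singular pencil lines: $L_0$ (through $\xi_0$, with $I(L_0,C;\xi_0)=pq$) and, for each nonzero $\eta$ with $\eta^{q(p-1)}=1$, the line $L_\eta$ tangent to the unique component $C^{(j)}$ (where $\eta^{p-1}=\zeta^{-j}$) at the smooth point $(0,\eta)$ with multiplicity $p$. The monodromy around each $L_\eta$ produces the standard cyclic braid relation for a multiplicity-$p$ tangency, forcing the $p$ meridians $h_{j,0},\ldots,h_{j,p-1}$ of $C^{(j)}$ to be cyclically conjugate by their product; combining the $p-1$ tangent monodromies attached to the same component $C^{(j)}$ collapses these meridians to a common element $g_j$ in the quotient.

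The substantive step is the monodromy around $L_0$. Using the Puiseux expansion $x = \zeta_p^k(-y)^{1/p}(1+O(y^{p-1}))$ for the $p$ branches of each $C^{(j)}$ at $\xi_0$, as $s$ winds once around $0$ the $pq$ intersection points undergo the permutation $h_{j,k}\mapsto h_{j,k+1 \bmod p}$: a cyclic shift of branches on each component, with no mixing between components (the $C^{(j)}$ are globally distinct). Working out the associated braid and transporting back to the basepoint $*_0$, we obtain relations that, modulo the tangent-line identifications of the previous paragraph, reduce to the statement that $\omega^p$ commutes with each $g_j$, where $\omega := g_1 \cdots g_q$ and $\omega^p = h_{q,p-1}\cdots h_{1,0}$ is the total meridian. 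This is exactly the centrality relation forced by Proposition 1(2), so no further relations arise.

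Assembling everything via Proposition 1 yields
\[
\pi_1(\Bbb C^2\setminus C)\cong\langle g_1,\ldots,g_q,\omega \mid \omega = g_1\cdots g_q,\ [g_j,\omega^p]=e\rangle;
\]
passing to the projective group by killing the total meridian $\omega^p$ gives
\[
\pi_1(\Bbb P^2\setminus C)\cong\langle g_1,\ldots,g_q,\omega \mid \omega = g_1\cdots g_q,\ \omega^p=e\rangle,
\]
isomorphic to $F(q-1)*\Bbb Z/p\Bbb Z$ after eliminating $g_q = g_{q-1}^{-1}\cdots g_1^{-1}\omega$. The main obstacle is the braid computation at $L_0$: one must execute the monodromy calculation at the $B_{p^2q,q}$-type singularity $\xi_0$ and verify that, modulo the tangent-line relations and the central relation from Proposition 1(2), no new relations beyond $[g_j,\omega^p]=e$ emerge. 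Conceptually, this is the statement that the single ``fat'' singularity of $C$ contributes the same global relations as the $p$ split $B_{pq,q}$ singularities of a generic linear torus curve.
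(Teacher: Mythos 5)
There is a fundamental mismatch here: the statement you were asked to prove is Proposition~1 itself, i.e.\ the general Zariski--van Kampen theorem (surjectivity of $j_{\sharp}$ with $\mathrm{Ker}\,j_{\sharp}=\mathcal{M}$) together with Oka's supplement (surjectivity of $\iota_{\sharp}$ with $\mathrm{Ker}\,\iota_{\sharp}=\langle\omega\rangle\cong\Bbb{Z}$ and $\omega$ central). Your text does not prove either assertion; it \emph{assumes} them --- it opens with ``apply the Van Kampen--Zariski pencil method of Proposition~1'' and later justifies the key commutation relation by saying it ``is exactly the centrality relation forced by Proposition~1(2).'' That is circular with respect to the target statement. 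What you have actually written is a sketch of the proof of Theorem~1 of the paper (the computation of $\pi_1$ for the specific maximal-contact linear torus curve), which is a different result that uses Proposition~1 as its main tool. In the paper, Proposition~1 is not proved at all but quoted from Zariski, van~Kampen and Oka.

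Concretely, a proof of the statement would have to address: (i) why every loop in $\Bbb{C}^2\setminus C^a$ can be homotoped into the generic fiber $L^a_{s_0}\setminus L^a_{s_0}\cap C$ (this is where the fibration of the pencil over $\Bbb{C}_y\setminus\Sigma$ and a general-position argument enter); (ii) why the kernel of $j_{\sharp}$ is exactly the normal subgroup $\mathcal{M}$ generated by the elements $g^{-1}g^{\sigma}$, i.e.\ why reinserting the singular fibers imposes precisely the monodromy relations and nothing more; and (iii) why, upon passing from $\Bbb{C}^2\setminus C^a$ to $\Bbb{P}^2\setminus C$, the only new relation is the killing of the big loop $\omega=g_d\cdots g_1$, why that element is central (it is the class of a large circle in a generic fiber, which can be pushed out to a meridian of the generic line at infinity and transported over all of $\Bbb{C}_y\setminus\Sigma$), and why it generates an infinite cyclic kernel. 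None of these points appears in your write-up, so as a proof of the stated proposition it has a complete gap; as a proof of Theorem~1 it is a reasonable outline, but that was not the assignment.
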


 \section{Proof of Theorem 1}
Let $(x,y)$ be affine coordinates such that $x=X/Z,\,y=Y/Z$ 
on $\Bbb{C}^2:=\Bbb{P}^2\setminus\{Z=0\}$.

\subsection{Construction of curves} 
In this section,
we construct
a linear torus curve $C$ of a maximal contact and
investigate its local properties.
First we introduce a  plane curve
$D_{\alpha}=\{g_\alpha(x,y)=0\}$ of degree $p$ where
the defining polynomial $g_{\alpha}(x,y)$ is defined by 
\[
 g_{\alpha}(u,y)=u-\psi(y,\alpha),\quad 
 \psi(y,\alpha)=y-\alpha y^p,\quad
 \alpha\in \Bbb{C}^{*}.
\]
%where $\alpha$ is a fixed  non-zero real number.
Now we consider the $p$-fold cyclic covering (\cite{OkaTwo})
which is defined by
\[
 \varphi_p:\Bbb{C}^2\to \Bbb{C}^2,\quad
 \varphi_p(x,y)=(u,y),\quad u=x^p.
\]
To distinguish two affine planes, we denote the 
source space of $\varphi_p$ by $\Bbb{C}^2_s$ with coordinates $(x,y)$
and the target space of   $\varphi_p$ by $\Bbb{C}^2_t$ 
with coordinates $(u,y)$.   
Hereafter we simply denote $\Bbb{C}^2$ instead of $\Bbb{C}^2_s$.

Let $C_{\alpha}:=\varphi^{-1}(D_{\alpha})$
be the pull-back of $D_\alpha$ by $\varphi_p$
and let
$f_{\alpha}(x,y)=g_{\alpha}(x^p,y)$ be the defining polynomial of
$C_{\alpha}$.
Note that 
\[
 f_{\alpha}(x,y)=x^p-\psi(y,\alpha).
\]
By the defining equation of $C_\alpha$,
we see that the set of parameters which corresponds to
the singular pencil lines
for $C_\alpha$ are given by
 \[
\Sigma_{\alpha}:=  \{y\in \Bbb{C}_y\,|\,\psi(y,\alpha)=0\}
 \]
(cf. \cite{Okacertain}).
Fix a $\gamma$ such that $\gamma^{p-1}=1/\alpha$.
Then we factorize $\psi(y,\alpha)$ as follows:
\[
 \psi(y,\alpha)=\alpha\, y\prod_{k=0}^{p-2}(\gamma \xi^{k}-y),\quad
  \xi:=\exp\left(\frac{2\pi \sqrt{-1}}{p-1}\right).
\]
Then we can see that
$O=(0,0)$ and $Q_{k}=(0,\gamma\, \xi^{k})$ for $k=0,\dots,p-2$
are flex points of $C_{\alpha}$ of flex order $p-2$ 
and
their tangent lines are nothing but
the singular pencil lines through
these points and
they are given by
 $y=0$ and $y=\gamma\xi^{k}$ respectively.
%Hence $\Delta_x(f_{\alpha})(y)$ is given as
%\[
% \Delta_x(f_{\alpha})(y)=c\,\psi_\alpha(y)^{p-1}
%   =c\,y^{p-1}\prod_{k=1}^{p-1}
 %   (\gamma \xi^{k-1}-y)^{p-1},\quad c\ne 0.
%\]

Now we are ready to define a reduced curve $C$.
Take $q$ non-zero mutually distinct complex numbers
$\alpha_1,\dots,\alpha_q$ and 
put $D_j=\{g_{\alpha_j}(x,y)=0\}$ for $j=1,\dots, q$ and put 
    $D=\bigcup_{i=1}^q D_j $.
Then put $C_j=\varphi_p^{-1}(D_j)$ for $=1,\dots,q$ and finally 
 we define 
\[
  C=\varphi_p^{-1}(D)=C_1\cup\cdots \cup C_q.
\]
The defining polynomials $f_j(x,y)$ and $f(x,y)$ of $C_j$ and $C$
respectively are given as follows.
\[
 f_j(x,y)=x^p-\psi(y,\alpha_j),\quad
 f(x,y)=\prod_{j=1}^qf_j(x,y).
\]
Put $ U=\{(\alpha_1,\dots,\alpha_q)\in \Bbb{C^{*}}^q\,|
         \,\alpha_i\ne \alpha_j, {\text{ for any $i\ne j$}}\}$.
It is known that the embedded topology of $C\subset \Bbb{C}^2$
 does not depend on the choice of
$(\alpha_1,\dots,\alpha_q)\in U$ (see \cite{BenoitTuCorrect}).

\begin{lemma}
The reduced curve $C$ can be a linear torus curve of a maximal contact
 for a certain  choice of $(\alpha_1,\dots,\alpha_q)$.
\end{lemma}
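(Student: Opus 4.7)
The plan is to exhibit an explicit choice of $(\alpha_1,\ldots,\alpha_q)\in U$ for which the defining polynomial $f(x,y)=\prod_{j=1}^q(x^p-y+\alpha_j y^p)$ factors as $F_p^q-L^{pq}$, and then to verify the two conditions defining a maximal contact: that $\{F_p=L=0\}$ is a single point and that $F_p$ is smooth there. The essential observation is that the polynomial $\psi(y,\alpha)=y-\alpha y^p$ depends linearly on $\alpha$ via the single monomial $y^p$, which is precisely $L^p$ for $L=y$. This should make the torus factorization a direct consequence of the $q$-th root of unity identity $T^q-1=\prod_{k=0}^{q-1}(T-\zeta^k)$.

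First I would set $F_p(x,y):=x^p-y+c\,y^p$ and $L(x,y):=e\,y$ for parameters $c\in\Bbb{C}$ and $e\in\Bbb{C}^{*}$ to be chosen, and compute
\[
F_p^q-L^{pq}=\prod_{k=0}^{q-1}\bigl(F_p-\zeta^k L^p\bigr)
=\prod_{k=0}^{q-1}\bigl(x^p-y+(c-\zeta^k e^p)\,y^p\bigr).
\]
Comparing this with $f(x,y)=\prod_{j=1}^q\bigl(x^p-y+\alpha_j y^p\bigr)$, I would put $\alpha_j:=c-\zeta^{j-1}e^p$ for $j=1,\ldots,q$. Then $f=F_p^q-L^{pq}$, so $C$ is a linear torus curve of type $(p,q)$. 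For generic $c$ and for any $e\ne 0$ the $\alpha_j$ are mutually distinct and nonzero, so $(\alpha_1,\ldots,\alpha_q)\in U$.

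Second I would verify maximal contact. The equations $F_p=0,\ L=0$ force $y=0$ and then $x^p=0$, so $\{F_p=L=0\}=\{\xi_0\}$ with $\xi_0=(0,0)$. Furthermore $\partial F_p/\partial y(0,0)=-1\ne 0$, so $\{F_p=0\}$ is smooth at $\xi_0$. Hence the intersection multiplicity $I(F_p,L;\xi_0)=p$ and $I(F_p,L^q;\xi_0)=pq$ by B\'ezout, confirming the maximal contact condition.

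The only delicate point I expect is making sure the required genericity (the $\alpha_j$ lying in $U$, and the curve being tame so that the main theorem applies) is actually satisfied by this family. Distinctness and nonvanishing of the $\alpha_j$ follow by avoiding the exceptional values $c=\zeta^k e^p$; tameness should then follow from the earlier description of the inner singularities, since by construction the only point where $F_p$ and $L$ vanish simultaneously is $\xi_0$, and the components $C_j=\varphi_p^{-1}(D_j)$ were set up so that their pairwise intersections all lie over $\{y=0\}=\{L=0\}$. Thus no outer singularities are introduced, and the lemma follows.
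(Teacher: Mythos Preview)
Your proof is correct and follows essentially the same route as the paper. Both arguments hinge on the identity $X^{q}-Y^{q}=\prod_{k=0}^{q-1}(X-\zeta^{k}Y)$: the paper fixes $\alpha_{j}=\zeta^{\,j-1}$ and computes the product directly to obtain $f=(y^{q})^{p}-(y-x^{p})^{q}$, whereas you run the computation backwards by writing down $F_{p}=x^{p}-y+cy^{p}$ and $L=ey$ first and then reading off $\alpha_{j}=c-\zeta^{\,j-1}e^{p}$; the paper's choice is simply the special case $c=0$, $e^{p}=-1$. Your verification of maximal contact (single intersection point at the origin, $\partial F_{p}/\partial y(0,0)=-1\neq0$) is exactly what is needed, and the closing remarks on tameness, while not strictly required for the lemma itself, do no harm.
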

\begin{proof}
We take
$(\alpha_1,\dots,\alpha_q)=(1,\zeta,\dots,\zeta^{p-1})\in U$,
 then we claim that $C=\{f(x,y)=0\}$ is
 a $(p,q)$-linear torus curve of a maximal contact. 
Indeed, $f(x,y)$ takes  the form: 
\[
\begin{split}
 f(x,y)&=\prod_{j=1}^q
 \left(
  \zeta^{j-1}y^p-y+x^p
 \right)\\&=
 (y^p)^{q}-(y-x^p)^q\\&= (y^q)^{p}-(y-x^p)^q.
\end{split}
\]
%Put $H_p=\{y-x^p=0\}$ and $L=\{y=0\}$ and then 
%\[
% H_p\cap L=\{O\},\quad I(H_p,qL;O)=qI(H_p,L;O)=pq,
%\]
% where $I(H_p,qL;O)$ is the local intersection multiplicity of
% $y-x^p=y^q=0$ at $O$.
This expression  shows that  $C$ is a $(p,q)$-linear torus curve of a maximal contact.
\end{proof}
For  practical computations, 
we suppose hereafter that
$\alpha_1,\dots,\alpha_q$
are real numbers such that
$\alpha_1 > \dots > \alpha_q>0$.
Let $\gamma_j$ be a real positive number such that
$\gamma_j^{p-1}=1/\alpha_j$ for $j=1,\dots,q$.
By the assumption $\alpha_1>\dots>\alpha_q>0$, we have 
\[
 0<\gamma_1< \cdots < \gamma_q.
\]
As $C_j\cap C_i=\{O\}$ for any $j\ne i$,
the possible singular pencil $L_{s}=\{y=s\}$
is either $\{y=0\}$ or $L_{s}$ is tangent to one of $C_j$
outside of $O$.
 
\begin{lemma}\label{local-data}
 Under the above situation,
 the local data of $C$ for the calculation of
 the fundamental group
 of $\Bbb{P}^2\setminus C$
 is the following.
 \begin{enumerate}
  \item Singular pencil lines are $y=0$ and $y=\gamma_j\xi^{k}$
          for $j=1,\dots,q$ and $k=0,\dots, p-2$.
        The pencil lines $y=\gamma_j\xi^{k}$ is tangent to $C_j$ at
        $Q_{j,k}:=(0,\gamma_j\xi^{k})$.
  \item  Two curves $C_j$ and $C_i$ $(j\ne i)$ intersect only at
         $O\in \Bbb{C}^2$
         and
         $I(C_j,C_i;O)=p^2$.
   \item  The singularity type $C$ at $O$ is given by
         $(C,O)\sim B_{p^2q,q}$.
 \end{enumerate}
\end{lemma}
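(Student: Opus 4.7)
My plan is to verify the three assertions in the order (2), (1), (3). The first two reduce to concrete computations with the defining polynomials $f_j(x,y) = x^p - \psi(y,\alpha_j) = x^p - y + \alpha_j y^p$, while (3) follows from the general fact about tame maximal contact curves recalled in the introduction.

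For (2), I would compute $I(C_j, C_i; O)$ in the local ring $\mathcal{O}_{\Bbb{C}^2, O}$. Subtraction gives $f_j - f_i = (\alpha_j - \alpha_i) y^p$, so $(f_j, f_i) = (f_j, y^p)$; reducing $f_j$ modulo $y^p$ leaves $(x^p - y, y^p)$, and eliminating $y = x^p$ identifies the quotient with $\Bbb{C}\{x\}/(x^{p^2})$, yielding $I(C_j, C_i; O) = p^2$. The same subtraction shows that any common zero $(x,y) \in C_j \cap C_i$ in $\Bbb{C}^2$ satisfies $(\alpha_j - \alpha_i)y^p = 0$, hence $y = 0$, and then $f_j(x, 0) = x^p = 0$ forces $x = 0$, so $C_j \cap C_i = \{O\}$.

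For (1), I would first check that each $C_j$ is smooth in $\Bbb{C}^2$: the vanishing of $\nabla f_j = (p x^{p-1},\, -1 + p \alpha_j y^{p-1})$ would force $x = 0$ and $p \alpha_j y^{p-1} = 1$, but then $\psi(y, \alpha_j) = y(1 - 1/p) \neq 0$ puts the point off $C_j$. Combined with (2), the singular locus of $C$ in $\Bbb{C}^2$ is exactly $\{O\}$. A pencil line $y = s$ is therefore singular with respect to $C$ precisely when it passes through $O$ (i.e.\ $s = 0$) or is tangent to some smooth point of some $C_j$. Tangency at $(x_0, y_0)$ requires $\partial_x f_j = p x_0^{p-1} = 0$, hence $x_0 = 0$, and then $(0, y_0) \in C_j$ forces $\psi(y_0, \alpha_j) = 0$, giving $y_0 \in \{0\} \cup \{\gamma_j \xi^k : k = 0, \dots, p-2\}$; using $\alpha_j \gamma_j^{p-1} = 1$ and $\xi^{p-1} = 1$ one finds $\partial_y f_j(Q_{j,k}) = p - 1 \neq 0$, so the tangent to $C_j$ at $Q_{j,k}$ is indeed the horizontal line $y = \gamma_j \xi^k$.

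The main obstacle, part (3), is handled by invoking the general fact from the introduction. The computation in Lemma 1 exhibits $C$ with torus form (up to sign) $F = F_p^q - F_q^p$ where $F_p = YZ^{p-1} - X^p$ has degree $p$, $F_q = Y^q = L^q$ with $L = Y$ linear, and $\{F_p = F_q = 0\} = \{[0:0:1]\} = \{O\}$; moreover, the affine gradient of $F_p$ at $O$ is $(-p x^{p-1}, 1)|_{(0,0)} = (0, 1) \neq 0$, so $F_p$ is smooth at $O$. Hence $C$ is a tame maximal contact $(p,q)$-torus curve with unique inner singular point $O$, and the cited result gives $(C, O) \sim B_{p^2 q, q}$. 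For a self-contained alternative, after the substitution $Y = y - x^p$ the polynomial $f = y^{pq} - (y-x^p)^q$ becomes $(Y + x^p)^{pq} - Y^q$; a short check shows the Newton polygon at $O$ consists of the single edge from $(0, q)$ to $(p^2 q, 0)$ with non-degenerate face polynomial $x^{p^2 q} - Y^q$, so Oka's theorem on Newton non-degenerate singularities yields the same topological equivalence.
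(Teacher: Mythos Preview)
Your argument is correct. The paper does not supply a formal proof of this lemma; it treats the statement as a summary of the discussion immediately preceding it (the identification of $\Sigma_\alpha=\{\psi(y,\alpha)=0\}$, the flex points $Q_k$ and their tangent lines, and the remark that $C_i\cap C_j=\{O\}$), together with the general fact from the introduction that a tame maximal contact $(p,q)$ torus curve has $(C,\xi_0)\sim B_{p^2q,q}$.

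Where the two can be compared, your approach and the paper's agree: part (3) is obtained by invoking the same introductory fact, and part (1) comes down to locating the zeros of $\psi(y,\alpha_j)$. Your treatment is more explicit in two places. For (2) you compute $I(C_j,C_i;O)$ directly via the ideal $(f_j,f_i)=(x^p-y,y^p)$, whereas the paper simply asserts $I(C_j,C_i;O)=p^2$ (relying implicitly on B\'ezout and the maximal contact description). For (1) you verify smoothness of each $C_j$ and tangency via $\nabla f_j$; the paper instead reads off the singular pencil parameters from the equation $x^p=\psi(y,\alpha_j)$, noting that $L_s\cap C_j$ has fewer than $p$ points exactly when $\psi(s,\alpha_j)=0$. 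Your Newton polygon argument for (3), after the triangular change $Y=y-x^p$, is a genuine self-contained alternative not present in the paper; it avoids appealing to the cited general result at the cost of checking non-degeneracy on the single edge from $(0,q)$ to $(p^2q,0)$.
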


%Actually we only need the singular pencils which are given by
%$y=0$ and $y=\gamma_{j}\xi^{k-1}$ to calculate of fundamental groups
%for $k=1,\dots,p-1$ and $j=1,\dots,q$.

\subsection{Calculation of the fundamental group
 $\pi_1(\Bbb{P}^2\setminus C)$ and  $\pi_1(\Bbb{C}^2\setminus C)$}
For the calculations of the fundamental groups
$\pi_1(\Bbb{P}^2\setminus C)$ and  $\pi_1(\Bbb{C}^2\setminus C)$,
 we use the van Kampen-Zariski pencil method.
We take the base point $B_{0}=[1:0:0]$ in $\Bbb{P}^2$ and 
consider the  pencil
$\mathcal{L}=\{L_s\,|\, s\in \Bbb{C}\}$ through $B_{0}$
   with 
$L_{s}=\{Y=s Z\}$.
The line at infinity $L_{\infty}$ is given by $\{Z=0\}$.
 Then $L_{\infty}$ is generic with respect to $C$.
Affine pencil is 
$\mathcal{L}^a=\{L^a_{s}\}_{s\in \Bbb{C}}$
    with $L^a_{s}=\{y=s\}$.
(By abuse of notation, we consider this pencil $\mathcal{L}=\{L_s\,|\, s\in \Bbb{C}\}$ in $\Bbb{C}^2_t$ and $\Bbb{C}^2_s$.)
By Lemma \ref{local-data},
 the set of parameters
$\Sigma \subset \Bbb{C}_y$ which corresponds to
singular pencil lines for $C$ is given as follows:
\[
 \Sigma:=\{0,\gamma_{j}\xi^{k}\in \Bbb{C}_y\,|\,
   k=0,\dots,p-2,\ j=1,\dots,q \}.
  %{\text{ where $\gamma_j=\alpha_j^{-\frac{1}{p-1}}$}}.
\]

\noindent
Take the base point
$\gamma_0$ of $\Bbb{C}_y\setminus \Sigma$
 on the real axis so that $0<\gamma_0<\gamma_1$.
As
\[
 \psi(\gamma_0,\alpha_j)- \psi(\gamma_0,\alpha_i)=
 (\alpha_i-\alpha_j)\gamma_0^p>0\quad
 {\text{ if }} \quad i<j,
\]
we have 
\[
   0<\psi(\gamma_0,\alpha_1)<\psi(\gamma_0,\alpha_2)< \cdots < \psi(\gamma_0,\alpha_q).
 \]
We take the base point $*_0=(\tau_0,\gamma_0)$ where $\tau_0$ is a
sufficiently large positive number. 
As $C$ is the pull-back of $D$ by the $p$-fold cyclic covering
$\varphi_p:(x,y)\mapsto (x^p,y)$,  the monodromy relations for
$\pi_1(L^a_{\gamma_0}\setminus L^a_{\gamma_0}\cap C;*_0)$
are essentially obtained by taking lifting 
 the monodromy relations 
 for $\pi_1(L^a_{\gamma_0}\setminus ( L^a_{\gamma_0}\cap D)\cup
 \{0\};*_t)$ by $\varphi_p$ where the  base point $*_t$ is a  real point 
 defined by $*_t=(\tau_0^p,\gamma_0)$  
 (\cite{OkaTwo}).
This is the basic idea for the computation of the fundamental groups.

We first take loops  $b_1,\dots, b_q$ of
 $\pi_1(L^a_{\gamma_0}\setminus (L^a_{\gamma_0}\cap D)\cup \{0\},*_t)$
 and put $\tau:=b_1\cdots b_q$
 as in Figure \ref{g-3}.
 \begin{center}
\begin{figure}[H]
\centering
%WinTpicVersion3.08
\unitlength 0.1in
\begin{picture}( 28.9700,  7.4000)(  2.4300,-11.2400)
% LINE 2 0 3 0
% 2 410 821 1460 821
% 
\special{pn 8}%
\special{pa 410 822}%
\special{pa 1460 822}%
\special{fp}%
% LINE 2 0 3 0
% 2 1880 821 3140 821
% 
\special{pn 8}%
\special{pa 1880 822}%
\special{pa 3140 822}%
\special{fp}%
% LINE 2 2 3 0
% 2 1250 821 2300 821
% 
\special{pn 8}%
\special{pa 1250 822}%
\special{pa 2300 822}%
\special{dt 0.045}%
% SPLINE 2 0 3 0
% 4 1796 548 1828 684 2027 821 2027 821
% 
\special{pn 8}%
\special{pa 1796 548}%
\special{pa 1800 582}%
\special{pa 1806 614}%
\special{pa 1812 646}%
\special{pa 1824 674}%
\special{pa 1838 700}%
\special{pa 1858 724}%
\special{pa 1880 744}%
\special{pa 1908 762}%
\special{pa 1936 778}%
\special{pa 1966 794}%
\special{pa 1998 808}%
\special{pa 2028 822}%
\special{sp}%
% SPLINE 2 0 3 0
% 4 1796 548 1397 653 1082 821 1082 821
% 
\special{pn 8}%
\special{pa 1796 548}%
\special{pa 1764 554}%
\special{pa 1734 562}%
\special{pa 1702 568}%
\special{pa 1670 574}%
\special{pa 1638 582}%
\special{pa 1608 588}%
\special{pa 1576 596}%
\special{pa 1546 604}%
\special{pa 1514 614}%
\special{pa 1484 622}%
\special{pa 1454 632}%
\special{pa 1424 644}%
\special{pa 1394 654}%
\special{pa 1366 668}%
\special{pa 1336 680}%
\special{pa 1308 694}%
\special{pa 1278 708}%
\special{pa 1250 724}%
\special{pa 1222 740}%
\special{pa 1194 754}%
\special{pa 1166 772}%
\special{pa 1140 788}%
\special{pa 1112 804}%
\special{pa 1084 820}%
\special{pa 1082 822}%
\special{sp}%
% SPLINE 2 0 3 0
% 4 1796 548 1586 663 1344 821 1344 821
% 
\special{pn 8}%
\special{pa 1796 548}%
\special{pa 1768 564}%
\special{pa 1740 578}%
\special{pa 1712 594}%
\special{pa 1684 608}%
\special{pa 1656 624}%
\special{pa 1628 640}%
\special{pa 1600 656}%
\special{pa 1572 672}%
\special{pa 1546 688}%
\special{pa 1518 706}%
\special{pa 1492 724}%
\special{pa 1464 740}%
\special{pa 1438 758}%
\special{pa 1412 776}%
\special{pa 1384 794}%
\special{pa 1358 812}%
\special{pa 1344 822}%
\special{sp}%
% STR 2 0 3 0
% 3 1098 731 1098 837 5 0
% {\large{$\bullet$}}
\put(10.9800,-8.3700){\makebox(0,0){{\large{$\bullet$}}}}%
% STR 2 0 3 0
% 3 1350 731 1350 837 5 0
% {\large{$\bullet$}}
\put(13.5000,-8.3700){\makebox(0,0){{\large{$\bullet$}}}}%
% STR 2 0 3 0
% 3 2032 731 2032 837 5 0
% {\large{$\bullet$}}
\put(20.3200,-8.3700){\makebox(0,0){{\large{$\bullet$}}}}%
% SPLINE 2 0 3 0
% 7 1796 548 2164 579 2384 695 3066 821 3066 821 3066 821 3066 821
% 
\special{pn 8}%
\special{pa 1796 548}%
\special{pa 1830 548}%
\special{pa 1862 546}%
\special{pa 1896 546}%
\special{pa 1928 544}%
\special{pa 1960 546}%
\special{pa 1994 546}%
\special{pa 2026 550}%
\special{pa 2056 552}%
\special{pa 2088 558}%
\special{pa 2118 566}%
\special{pa 2148 574}%
\special{pa 2178 584}%
\special{pa 2206 598}%
\special{pa 2236 612}%
\special{pa 2264 628}%
\special{pa 2292 644}%
\special{pa 2320 660}%
\special{pa 2348 676}%
\special{pa 2376 690}%
\special{pa 2404 706}%
\special{pa 2434 718}%
\special{pa 2462 730}%
\special{pa 2492 742}%
\special{pa 2522 752}%
\special{pa 2552 760}%
\special{pa 2584 768}%
\special{pa 2614 776}%
\special{pa 2646 782}%
\special{pa 2678 788}%
\special{pa 2708 794}%
\special{pa 2740 798}%
\special{pa 2772 802}%
\special{pa 2806 806}%
\special{pa 2838 808}%
\special{pa 2870 810}%
\special{pa 2902 814}%
\special{pa 2936 816}%
\special{pa 2968 816}%
\special{pa 3002 818}%
\special{pa 3034 820}%
\special{pa 3066 822}%
\special{sp}%
% STR 2 0 3 0
% 3 3077 842 3077 947 5 0
% $*_t$
\put(30.7700,-9.4700){\makebox(0,0){$*_t$}}%
% STR 2 0 3 0
% 3 1176 527 1176 632 5 0
% {\small{$b_1$}}
\put(11.7600,-6.3200){\makebox(0,0){{\small{$b_1$}}}}%
% STR 2 0 3 0
% 3 1630 635 1630 740 5 0
% {\small{$b_2$}}
\put(16.3000,-7.4000){\makebox(0,0){{\small{$b_2$}}}}%
% STR 2 0 3 0
% 3 2016 600 2016 705 5 0
% {\small{$b_q$}}
\put(20.1600,-7.0500){\makebox(0,0){{\small{$b_q$}}}}%
% SPLINE 2 0 3 0
% 10 3056 821 2962 800 2489 642 1859 432 862 663 862 947 1702 1010 2332 905 3056 821 3056 821
% 
\special{pn 8}%
\special{pa 3056 822}%
\special{pa 3026 814}%
\special{pa 2994 808}%
\special{pa 2962 800}%
\special{pa 2932 792}%
\special{pa 2900 784}%
\special{pa 2870 776}%
\special{pa 2838 766}%
\special{pa 2808 758}%
\special{pa 2778 748}%
\special{pa 2746 738}%
\special{pa 2716 728}%
\special{pa 2686 716}%
\special{pa 2656 706}%
\special{pa 2626 694}%
\special{pa 2596 684}%
\special{pa 2566 672}%
\special{pa 2536 660}%
\special{pa 2506 650}%
\special{pa 2478 638}%
\special{pa 2448 626}%
\special{pa 2420 614}%
\special{pa 2392 604}%
\special{pa 2362 592}%
\special{pa 2334 582}%
\special{pa 2306 570}%
\special{pa 2276 558}%
\special{pa 2248 548}%
\special{pa 2218 538}%
\special{pa 2190 526}%
\special{pa 2160 516}%
\special{pa 2130 506}%
\special{pa 2100 496}%
\special{pa 2068 486}%
\special{pa 2036 478}%
\special{pa 2006 468}%
\special{pa 1972 460}%
\special{pa 1940 452}%
\special{pa 1906 442}%
\special{pa 1872 436}%
\special{pa 1836 428}%
\special{pa 1800 420}%
\special{pa 1764 414}%
\special{pa 1726 408}%
\special{pa 1690 402}%
\special{pa 1652 398}%
\special{pa 1614 394}%
\special{pa 1576 390}%
\special{pa 1538 388}%
\special{pa 1498 386}%
\special{pa 1460 386}%
\special{pa 1424 384}%
\special{pa 1386 386}%
\special{pa 1348 388}%
\special{pa 1312 390}%
\special{pa 1276 394}%
\special{pa 1242 400}%
\special{pa 1206 406}%
\special{pa 1174 414}%
\special{pa 1140 424}%
\special{pa 1110 434}%
\special{pa 1080 446}%
\special{pa 1050 460}%
\special{pa 1024 474}%
\special{pa 998 490}%
\special{pa 974 508}%
\special{pa 950 528}%
\special{pa 930 550}%
\special{pa 910 572}%
\special{pa 894 598}%
\special{pa 878 624}%
\special{pa 866 654}%
\special{pa 856 684}%
\special{pa 848 716}%
\special{pa 842 750}%
\special{pa 838 784}%
\special{pa 838 818}%
\special{pa 840 852}%
\special{pa 844 884}%
\special{pa 850 914}%
\special{pa 860 942}%
\special{pa 872 968}%
\special{pa 888 990}%
\special{pa 904 1010}%
\special{pa 924 1028}%
\special{pa 946 1042}%
\special{pa 972 1056}%
\special{pa 998 1066}%
\special{pa 1026 1074}%
\special{pa 1054 1082}%
\special{pa 1086 1086}%
\special{pa 1118 1090}%
\special{pa 1152 1090}%
\special{pa 1188 1090}%
\special{pa 1224 1090}%
\special{pa 1260 1086}%
\special{pa 1298 1084}%
\special{pa 1336 1078}%
\special{pa 1374 1074}%
\special{pa 1414 1068}%
\special{pa 1452 1060}%
\special{pa 1492 1054}%
\special{pa 1530 1046}%
\special{pa 1570 1038}%
\special{pa 1608 1030}%
\special{pa 1646 1022}%
\special{pa 1684 1014}%
\special{pa 1720 1006}%
\special{pa 1756 1000}%
\special{pa 1790 992}%
\special{pa 1824 986}%
\special{pa 1858 980}%
\special{pa 1892 974}%
\special{pa 1924 968}%
\special{pa 1956 962}%
\special{pa 1986 956}%
\special{pa 2018 952}%
\special{pa 2048 946}%
\special{pa 2078 942}%
\special{pa 2108 938}%
\special{pa 2138 932}%
\special{pa 2168 928}%
\special{pa 2198 924}%
\special{pa 2228 920}%
\special{pa 2258 916}%
\special{pa 2288 912}%
\special{pa 2318 908}%
\special{pa 2348 904}%
\special{pa 2378 900}%
\special{pa 2408 896}%
\special{pa 2440 892}%
\special{pa 2470 888}%
\special{pa 2502 884}%
\special{pa 2534 880}%
\special{pa 2564 876}%
\special{pa 2596 872}%
\special{pa 2628 870}%
\special{pa 2660 866}%
\special{pa 2692 862}%
\special{pa 2724 858}%
\special{pa 2756 854}%
\special{pa 2788 850}%
\special{pa 2820 848}%
\special{pa 2852 844}%
\special{pa 2886 840}%
\special{pa 2918 836}%
\special{pa 2950 834}%
\special{pa 2982 830}%
\special{pa 3016 826}%
\special{pa 3048 822}%
\special{pa 3056 822}%
\special{sp}%
% STR 2 0 3 0
% 3 1218 1104 1218 1209 5 0
% $\tau$
\put(12.1800,-12.0900){\makebox(0,0){$\tau$}}%
% STR 2 0 3 0
% 3 641 863 641 968 5 0
% $0$
\put(6.4100,-9.6800){\makebox(0,0){$0$}}%
% STR 2 0 3 0
% 3 1733 611 1733 716 5 0
% $\cdots$
\put(17.3300,-7.1600){\makebox(0,0){$\cdots$}}%
% STR 2 0 3 0
% 3 3070 715 3070 820 5 0
% $\times$
\put(30.7000,-8.2000){\makebox(0,0){$\times$}}%
% STR 2 0 3 0
% 3 640 715 640 820 5 0
% $\times$
\put(6.4000,-8.2000){\makebox(0,0){$\times$}}%
\end{picture}%
 \vspace{0.5cm}
  \caption{The loops $b_1,\dots, b_q$ in $\{y=\gamma_0\}\cap \Bbb{C}^2_t$}\label{g-3}
\end{figure}
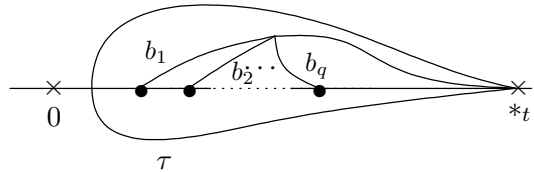
\end{center}
Let $a_{i,j}'$ and $\omega_i'$
be the pull-back of $b_j$  and $\tau$ by $\varphi_p$ respectively
starting from $*_i:=(\eta^i\tau_0,\gamma_0)$ with $i=0,\dots,p-1$ where
$\eta:=\exp(2\pi \sqrt{-1}/p)$
and
let $a_{i,j}$ and $\omega_i$ be the loop
$\ell_i\circ a_{i,j}'\circ  \ell_i^{-1}$ and
$\ell_i\circ \omega_i' \circ \ell_i^{-1}$
where $\ell_i$ is the arc of  the circle $|x|=\tau_0$
from $*_{0}$ to $*_i$ 
as in Figure \ref{g-5}.
Hereafter we identify $a_{i,j}'$ and $a_{i,j}$ in this way. 
\begin{figure}[H]
 \centering
%\hspace{-1.5cm}
 \input{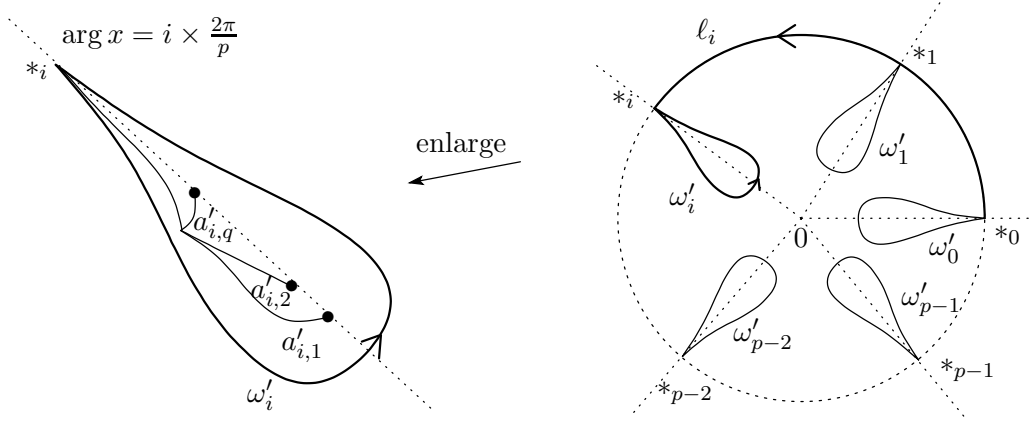}
\caption{The loops $a_{i,j}$ and $\omega_i$ in $\{y=\gamma_0\}\cap \Bbb{C}^2_s$}\label{g-5}
\end{figure}

First we see that
the monodromy relations
on the real axis
in $\Bbb{C}_y$ which 
 correspond to singular pencil lines
 $y=0$ and $y=\gamma_j$ for $j=1,\dots, q$. 
To see these monodromy relations,
we consider following loops $\sigma_0$ and $\sigma_j$ in $\Bbb{C}_y$
 for $j=1,\dots,q$.
First we define the loop $\sigma_0$.
Let $K_0$ be the line segment
from $\gamma_0$ to $0-\varepsilon$ on the real axis and
let $S_0$ be the circle $|y|=\varepsilon$ where the circle is 
always oriented counterclockwise.   %which is represented by
%$y=\varepsilon\exp (2\pi \sqrt{-1} \theta)$  for $0\le \theta\le 1$.
Then  $\sigma_0$ is defined as the loop (see Figure \ref{f-2})
\[
 \sigma_0:=K_0\circ S_0 \circ K_0^{-1}.
\]
Next we define loops  $\sigma_j$ for $j=1,\dots,q$.
%Let $u_1$ be the line segment from $\gamma_{0}$ to
%     $\gamma_1-\varepsilon$ and let $u_j$ $(j\ne 1)$
%     be the line segment
%from $\gamma_{j-1}+\varepsilon$ to
%     $\gamma_j-\varepsilon$ on the real axis.
Let $S_j$ be the loop which is represented by the circle
$|y-\gamma_j|=\varepsilon$ oriented counter clockwise.
Let $K_j$ be the modified line segment from $\gamma_0$ to $\gamma_j-\varepsilon$.
The segment $[\gamma_i-\varepsilon,\gamma_i+\varepsilon]$ is replaced
by the lower half circle of $S_i$.
Then $\sigma_j$ is defined as the loop (see Figure \ref{f-2})
\[
\sigma_j:=K_j \circ  S_j  \circ  K_j^{-1}.
\]
\begin{center}
\begin{figure}[H]
%\centering
\hspace{-2.3cm}
 %WinTpicVersion3.08
\unitlength 0.1in
\begin{picture}( 58.3600,  5.5200)( 11.7700,-17.7600)
% CIRCLE 2 0 3 0
% 4 3782 1597 3609 1597 3609 1597 4300 1597
% 
\special{pn 8}%
\special{ar 3782 1598 174 174  6.2831853 6.2831853}%
\special{ar 3782 1598 174 174  0.0000000 3.1415927}%
% CIRCLE 2 0 3 0
% 4 4473 1597 4300 1597 4300 1597 4991 1597
% 
\special{pn 8}%
\special{ar 4474 1598 174 174  6.2831853 6.2831853}%
\special{ar 4474 1598 174 174  0.0000000 3.1415927}%
% CIRCLE 2 0 3 0
% 4 6840 1603 6667 1603 6667 1603 6321 1257
% 
\special{pn 8}%
\special{ar 6840 1604 174 174  3.7295953 6.2831853}%
\special{ar 6840 1604 174 174  0.0000000 3.1415927}%
% SARROW 2 0 3 1
% 2 6712 1486 6696 1507
% 
\special{pn 8}%
\special{pa 6712 1486}%
\special{pa 6696 1508}%
\special{fp}%
\special{sh 1}%
\special{pa 6696 1508}%
\special{pa 6752 1466}%
\special{pa 6728 1466}%
\special{pa 6720 1442}%
\special{pa 6696 1508}%
\special{fp}%
% LINE 2 0 3 0
% 2 5976 1603 6667 1603
% 
\special{pn 8}%
\special{pa 5976 1604}%
\special{pa 6668 1604}%
\special{fp}%
% LINE 2 0 3 0
% 2 3955 1597 4300 1597
% 
\special{pn 8}%
\special{pa 3956 1598}%
\special{pa 4300 1598}%
\special{fp}%
% LINE 2 0 3 0
% 2 2780 1589 3609 1589
% 
\special{pn 8}%
\special{pa 2780 1590}%
\special{pa 3610 1590}%
\special{fp}%
% STR 2 0 3 0
% 3 3782 1427 3782 1599 5 0
% {\scriptsize{$\bullet$}}
\put(37.8200,-15.9900){\makebox(0,0){{\scriptsize{$\bullet$}}}}%
% STR 2 0 3 0
% 3 4473 1427 4473 1599 5 0
% {\scriptsize{$\bullet$}}
\put(44.7300,-15.9900){\makebox(0,0){{\scriptsize{$\bullet$}}}}%
% STR 2 0 3 0
% 3 6840 1433 6840 1606 5 0
% {\scriptsize{$\bullet$}}
\put(68.4000,-16.0600){\makebox(0,0){{\scriptsize{$\bullet$}}}}%
% STR 2 0 3 0
% 3 7164 1455 7164 1628 5 0
% {\small{$S_j$}}
\put(71.6400,-16.2800){\makebox(0,0){{\small{$S_j$}}}}%
% STR 2 0 3 0
% 3 4499 1253 4499 1425 5 0
% {\small{$\gamma_2$}}
\put(44.9900,-14.2500){\makebox(0,0){{\small{$\gamma_2$}}}}%
% STR 2 0 3 0
% 3 2780 1268 2780 1440 5 0
% {\small{$\gamma_{0}$}}
\put(27.8000,-14.4000){\makebox(0,0){{\small{$\gamma_{0}$}}}}%
% LINE 2 0 3 0
% 2 4646 1599 5337 1599
% 
\special{pn 8}%
\special{pa 4646 1600}%
\special{pa 5338 1600}%
\special{fp}%
% LINE 2 2 3 0
% 2 5389 1607 6000 1610
% 
\special{pn 8}%
\special{pa 5390 1608}%
\special{pa 6000 1610}%
\special{dt 0.045}%
% STR 2 0 3 0
% 3 2257 1415 2257 1588 5 0
% {\scriptsize{$\bullet$}}
\put(22.5700,-15.8800){\makebox(0,0){{\scriptsize{$\bullet$}}}}%
% STR 2 0 3 0
% 3 2235 1137 2235 1309 5 0
% {\small{$O$}}
\put(22.3500,-13.0900){\makebox(0,0){{\small{$O$}}}}%
% CIRCLE 2 0 3 0
% 4 2248 1590 2421 1588 2421 1588 2768 1932
% 
\special{pn 8}%
\special{ar 2248 1590 174 174  0.5817638 6.2716251}%
% SARROW 2 0 3 1
% 2 2377 1704 2392 1684
% 
\special{pn 8}%
\special{pa 2378 1704}%
\special{pa 2392 1684}%
\special{fp}%
\special{sh 1}%
\special{pa 2392 1684}%
\special{pa 2336 1726}%
\special{pa 2360 1728}%
\special{pa 2368 1750}%
\special{pa 2392 1684}%
\special{fp}%
% LINE 2 0 3 0
% 2 2707 1587 2880 1587
% 
\special{pn 8}%
\special{pa 2708 1588}%
\special{pa 2880 1588}%
\special{fp}%
% STR 2 0 3 0
% 3 3808 1253 3808 1425 5 0
% {\small{$\gamma_1$}}
\put(38.0800,-14.2500){\makebox(0,0){{\small{$\gamma_1$}}}}%
% STR 2 0 3 0
% 3 6851 1153 6851 1325 5 0
% {\small{$\gamma_j$}}
\put(68.5100,-13.2500){\makebox(0,0){{\small{$\gamma_j$}}}}%
% STR 2 0 3 0
% 3 1911 1403 1911 1576 5 0
% {\small{$S_0$}}
\put(19.1100,-15.7600){\makebox(0,0){{\small{$S_0$}}}}%
% STR 2 0 3 0
% 3 2772 1421 2772 1594 5 0
% {\scriptsize{$\bullet$}}
\put(27.7200,-15.9400){\makebox(0,0){{\scriptsize{$\bullet$}}}}%
% LINE 2 0 3 0
% 4 4981 1535 5056 1599 5056 1599 4981 1664
% 
\special{pn 8}%
\special{pa 4982 1536}%
\special{pa 5056 1600}%
\special{fp}%
\special{pa 5056 1600}%
\special{pa 4982 1664}%
\special{fp}%
% STR 2 0 3 0
% 3 5450 1632 5450 1740 5 0
% $K_j$
\put(54.5000,-17.4000){\makebox(0,0){$K_j$}}%
% LINE 2 0 3 0
% 2 2419 1587 2729 1587
% 
\special{pn 8}%
\special{pa 2420 1588}%
\special{pa 2730 1588}%
\special{fp}%
% STR 2 0 3 0
% 3 2600 1638 2600 1710 5 0
% $K_0$
\put(26.0000,-17.1000){\makebox(0,0){$K_0$}}%
\end{picture}%
 \vspace{0.5cm}
  \caption{Loops in $\Bbb{C}_y$}\label{f-2}
\end{figure}
\end{center}

\noindent{\bf{Case 1}}:
First we see the monodromy relations at $y=0$.
By the definitions of $C_j$'s and Lemma \ref{local-data},
the origin $O$ is a flex point of  $C_j$ such that
$\{y=0\}$ is the tangent line
for $j=1,\dots,q$ and 
$C_i$ and $C_j$ intersect with intersection multiplicity $p^2$
at $O$ for each $i\ne j$
and the topological type of $C$ at $O$ is $B_{p^2q,q}$.
\\
To see that monodromy relations,
we look at the Puiseux parametrization of each component $C_{j}$ at $O$.
Consider that curves $D_j$ and $D$ whose
 defining polynomials are $g_j(x,y)=x-\psi(y,\alpha_j)$ and
$g(x,y)=\prod_{j=1}^q g_j(x,y)$ respectively.
By the definitions,
$\psi(y,\alpha_j)=y(y-\alpha_jy^{p-1})$,
$f_j(x,y)=g_j(x^p,y)$,
we have $x^p=y(1-\alpha_jy^{p-1})$.
%$x=y^{1/p}(1-\alpha_jy^{p-1})^{1/p}$.
By the  generalized  binomial  theorem, we can solve
$x^p=y(1-\alpha_jy^{p-1})$  as follows.
\[
\begin{split}
(1)&\quad  C_j:\quad    \begin{cases}
         x=\varphi_j(t),\quad 
           \varphi_j(t)=
          t\left(1 -\dfrac{\alpha_j}{p}\,t^{p\left( p-1\right)}+
                   \cdots\right), \quad j=1,\dots,q.\\
          y=t^p, 
         \end{cases}\\
(2)&\quad            \dfrac{\varphi_j(t)}{t}-\dfrac{\varphi_i(t)}{t}=
           \dfrac{1}{p}(\alpha_i-\alpha_j)t^{p(p-1)}+\cdots,
           \quad  j\ne i.
\end{split}\]
Note that the leading term of $\varphi_j(t)$ is $t$ which is 
independent of index  $j=1,\dots,q$.
The topological behavior of the centers of the  generators,
 $pq$ points $C\cap \{y=\varepsilon \exp( \sqrt{-1} \,\theta)\}$,
looks like the movements of  satellites around  planets
with $0\le \theta \le 2\pi$.
For a fixed $y$, there are $p$ choices of $t$ so that $y=t^p$. 
We take $t$ so that $0\le \arg t \le 2\pi/p$.
Thus 
planets are the points $P_i=(t\eta^i,t^p)$ for
$i=0,\dots,p-1$ and the satellites around $P_i$ are
 $\{(\varphi_j(t\eta^i),t^p)\,|\,j=1,\dots,q\}$
where 
%$t=\varepsilon^{1/p} \exp(\sqrt{-1}\, \theta/p)$ with
%$0\le \theta \le 2\pi$
  $\eta=\exp(2\pi \sqrt{-1}/p)$.

Above conditions $(1)$ and $(2)$ say that
$p$ planets  moves an arc of the angle $2\pi /p$
centered at the origin
when $t=\varepsilon^{1/p} \exp(\sqrt{-1}\, \theta/p)$
moves from $\theta=0$ to $2\pi$.
Then the satellites,
which are the center of loops $\{a_{i,j}\,|\,j=1,\dots,q\}$,
are rotated $(p-1)$-times around $P_i$ simultaneously for
$i=0,\dots,p-1$.
Hence we have the monodromy relations:
\begin{equation*}
(1{\text{-}}1)\quad a_{i,j}=a_{i,j}^{\sigma_0}=
\begin{cases}
   \omega_{i+1}^{p-1}a_{i+1,j}\omega_{i+1}^{-(p-1)} & 0\le i\le p-2,\\
   \Omega\, \omega_{1}^{p-1}a_{0,j}(\Omega\, \omega_{1}^{p-1})^{-1} &
  i= p-1,
\end{cases}
\quad j=1\dots,q
\end{equation*}
 where
$a_{i,j}^{\sigma_0}$ is the monodromy action by $\sigma_0$ on $a_{i,j}$.
See Figure \ref{h14} for the case $p=3$ and $q=2$.\\
\begin{figure}[H]
\centering
 \input{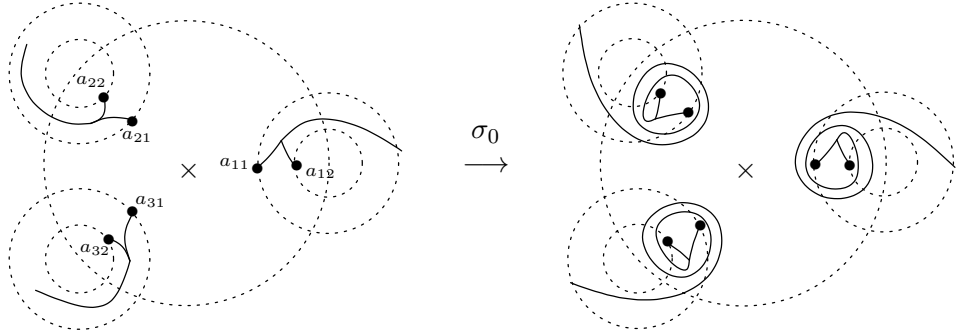}
 %\vspace{0.5cm}
  \caption{The case $p=3$ and $q=2$}\label{h14}
\end{figure}

On the other hand,
we get the relation $\omega_1=\omega_2=\cdots=\omega_p$
when $y=\varepsilon \exp(2\pi \sqrt{-1} \theta)$ moves around the origin once.
Hence  we have
\[
 \Omega=\omega^p,\quad
 \omega:=\omega_1=\omega_2=\cdots=\omega_p.
\]
We can rewrite the relations  $(1{\text{-}}1)$ as follows:
 \begin{equation*}
(1{\text{-}}2)\quad a_{i,j}=
\begin{cases}
   \omega^{p-1}a_{i+1,j}\omega^{-(p-1)} & 0\le i\le p-2,\\
   \omega^{2p-1}a_{0,j}\omega^{-(2p-1)} &  i= p-1,
\end{cases}
\quad j=1,\dots,q.
\end{equation*}

\noindent{\bf{Case 2}}:
Next we consider the monodromy relations at $y=\gamma_j$ for $j\ge 1$.
In this case,
the pencil line $L_{\gamma_j}$ is
tangent  to $C_j$ and 
 $C_j\cap L_{\gamma_j}=\{Q_{j,0}\}=\{(0,\gamma_j)\}$
is a flex point of $C_j$ of flex order $p-2$.
On the other hand,
the pencil line $L_{\gamma_j}$
is generic with respect to other $C_i$ for $i\ne j$.

First we consider the case $j=1$.
Recall that the defining polynomial of $C_i$ is
\[
 f_i(x,y)=x^p-\psi(y,\alpha_i)=x^p-y\prod_{k=0}^{p-2}(\gamma_i\xi^k-y),\quad
i=1,\dots,q.
\]
We take the local coordinates $(x,y_1):=(x,y-\gamma_1)$
centered at $Q_{1,0}$.
By an easy calculation,
\begin{eqnarray}\label{coefficient}
 f_{i}(x,y_1+\gamma_1)=0 \Leftrightarrow
 x^p=
 \begin{cases}
 (1-p)y_1+H_1(y_1),\quad   & i=1,\\
 \frac{\gamma_1}{\alpha_1}(\alpha_1-\alpha_i)+H_i(y_1)  &i\ne1
 \end{cases}
\end{eqnarray}
and $\alpha_1-\alpha_i>0$ where
$\ord_{y_1}\, H_1\ge 2$ and $\ord_{y_1}\,H_i\ge 1$ for $i\ge 2$. The
first coefficients $1-p$ and $\frac{\gamma_1}{\alpha_1}(\alpha_1-\alpha_i)$
are obtained from the equalities:
\[
\begin{cases}
 \psi(y_1+\gamma_1,\alpha_1)=-(y_1+\gamma_1)y_1\prod_{k\ge
   1}(\gamma_1\xi^k-y_1-\gamma_1)\\
 \psi(y_1+\gamma_1,\alpha_i)=(y_1+\gamma_1)-\alpha_i(y_1+\gamma_1)^p
\end{cases}
\]
and 
\[
\begin{cases}
\frac{d \psi(y_1+\gamma_1,\alpha_1)}{d y_1}|_{y_1=0}=\frac{d
   \psi(y,\alpha_1)}{d y}|_{y=\gamma_1}=1-p\\
\psi(\gamma_1,\alpha_i)=
\frac{\gamma_1}{\alpha_1}(\alpha_1-\alpha_i).
\end{cases}
\]
Now we consider the monodromy relations at $y=\gamma_1$.
First, the action of $\sigma_1$ on $b_1,\dots, b_q$ is
sketched as in Figure \ref{h1}.  
Thus we see that 
 the generators which are topologically deformed are
$\{a_{i,1}\,|\,i=0,\dots,p-1\}$ 
 under the rotation  $y_1=-\varepsilon\exp( \sqrt{-1}\theta)$ with
 $0\le \theta \le 2\pi$. The other generators are unchanged.
 Namely $a_{i,j}^{\sigma_1}=a_{i,j}$
 for $i=0,\dots,p-1$ and  $j\ge 2$.
%Now we consider the monodromy relations at $y=\gamma_1$.
To simplify the monodromy relations,
we introduce an element  $g_1:=b_2\cdots b_q$.
Then $\tau=b_1 g_1$.
See Figure \ref{h1}.
\begin{figure}[H]
\centering
 \hspace{8cm}
%WinTpicVersion3.08
\unitlength 0.1in
\begin{picture}( 56.4700,  4.7500)(  4.1200,-14.1000)
% LINE 2 2 3 0
% 2 460 1306 3036 1306
% 
\special{pn 8}%
\special{pa 460 1306}%
\special{pa 3036 1306}%
\special{dt 0.045}%
% STR 2 0 3 0
% 3 772 1226 772 1306 5 0
% $\times$
\put(7.7200,-13.0600){\makebox(0,0){$\times$}}%
% STR 2 0 3 0
% 3 876 1222 876 1302 5 0
% $\bullet$
\put(8.7600,-13.0200){\makebox(0,0){$\bullet$}}%
% SPLINE 2 0 3 0
% 4 2884 1302 2244 1054 1636 1014 1636 1014
% 
\special{pn 8}%
\special{pa 2884 1302}%
\special{pa 2856 1288}%
\special{pa 2826 1274}%
\special{pa 2796 1260}%
\special{pa 2768 1246}%
\special{pa 2738 1232}%
\special{pa 2708 1220}%
\special{pa 2680 1206}%
\special{pa 2650 1192}%
\special{pa 2620 1180}%
\special{pa 2590 1168}%
\special{pa 2560 1154}%
\special{pa 2532 1144}%
\special{pa 2502 1132}%
\special{pa 2472 1120}%
\special{pa 2442 1110}%
\special{pa 2412 1100}%
\special{pa 2380 1090}%
\special{pa 2350 1082}%
\special{pa 2320 1072}%
\special{pa 2290 1064}%
\special{pa 2258 1058}%
\special{pa 2228 1050}%
\special{pa 2196 1046}%
\special{pa 2166 1040}%
\special{pa 2134 1036}%
\special{pa 2102 1032}%
\special{pa 2070 1028}%
\special{pa 2038 1024}%
\special{pa 2006 1022}%
\special{pa 1976 1020}%
\special{pa 1944 1018}%
\special{pa 1910 1018}%
\special{pa 1878 1016}%
\special{pa 1846 1016}%
\special{pa 1814 1014}%
\special{pa 1782 1014}%
\special{pa 1750 1014}%
\special{pa 1718 1014}%
\special{pa 1684 1014}%
\special{pa 1652 1014}%
\special{pa 1636 1014}%
\special{sp}%
% SPLINE 2 0 3 0
% 4 1636 1014 1300 1102 876 1302 876 1302
% 
\special{pn 8}%
\special{pa 1636 1014}%
\special{pa 1606 1022}%
\special{pa 1574 1028}%
\special{pa 1542 1036}%
\special{pa 1512 1042}%
\special{pa 1480 1050}%
\special{pa 1448 1058}%
\special{pa 1418 1066}%
\special{pa 1388 1076}%
\special{pa 1356 1084}%
\special{pa 1326 1094}%
\special{pa 1296 1104}%
\special{pa 1266 1114}%
\special{pa 1236 1126}%
\special{pa 1206 1138}%
\special{pa 1178 1150}%
\special{pa 1148 1164}%
\special{pa 1120 1178}%
\special{pa 1090 1192}%
\special{pa 1062 1206}%
\special{pa 1034 1220}%
\special{pa 1004 1234}%
\special{pa 976 1250}%
\special{pa 948 1264}%
\special{pa 920 1280}%
\special{pa 890 1294}%
\special{pa 876 1302}%
\special{sp}%
% STR 2 0 3 0
% 3 1500 1218 1500 1298 5 0
% $\bullet$
\put(15.0000,-12.9800){\makebox(0,0){$\bullet$}}%
% ELLIPSE 2 0 3 0
% 4 1660 1294 1932 1230 1596 1230 1596 1230
% 
\special{pn 8}%
\special{ar 1660 1294 272 64  0.0000000 6.2831853}%
% SPLINE 2 0 3 0
% 4 1636 1014 1652 1102 1596 1238 1596 1238
% 
\special{pn 8}%
\special{pa 1636 1014}%
\special{pa 1646 1046}%
\special{pa 1652 1078}%
\special{pa 1652 1108}%
\special{pa 1646 1140}%
\special{pa 1636 1168}%
\special{pa 1620 1198}%
\special{pa 1604 1228}%
\special{pa 1596 1238}%
\special{sp}%
% STR 2 0 3 0
% 3 1732 1054 1732 1134 5 0
% $g_1$
\put(17.3200,-11.3400){\makebox(0,0){$g_1$}}%
% STR 2 0 3 0
% 3 1100 998 1100 1078 5 0
% $b_1$
\put(11.0000,-10.7800){\makebox(0,0){$b_1$}}%
% STR 2 0 3 0
% 3 2892 1322 2892 1402 5 0
% $*_t$
\put(28.9200,-14.0200){\makebox(0,0){$*_t$}}%
% STR 2 0 3 0
% 3 770 1080 770 1160 5 0
% $0$
\put(7.7000,-11.6000){\makebox(0,0){$0$}}%
% STR 2 0 3 0
% 3 1820 1218 1820 1298 5 0
% $\bullet$
\put(18.2000,-12.9800){\makebox(0,0){$\bullet$}}%
% LINE 2 2 3 0
% 2 3483 1303 6059 1303
% 
\special{pn 8}%
\special{pa 3484 1304}%
\special{pa 6060 1304}%
\special{dt 0.045}%
% STR 2 0 3 0
% 3 3811 1223 3811 1303 5 0
% $\times$
\put(38.1100,-13.0300){\makebox(0,0){$\times$}}%
% SPLINE 2 0 3 0
% 4 5907 1299 5267 1051 4659 1011 4659 1011
% 
\special{pn 8}%
\special{pa 5908 1300}%
\special{pa 5878 1286}%
\special{pa 5848 1272}%
\special{pa 5820 1258}%
\special{pa 5790 1244}%
\special{pa 5760 1230}%
\special{pa 5732 1216}%
\special{pa 5702 1202}%
\special{pa 5672 1190}%
\special{pa 5644 1176}%
\special{pa 5614 1164}%
\special{pa 5584 1152}%
\special{pa 5554 1140}%
\special{pa 5524 1128}%
\special{pa 5494 1118}%
\special{pa 5464 1106}%
\special{pa 5434 1096}%
\special{pa 5404 1088}%
\special{pa 5374 1078}%
\special{pa 5342 1070}%
\special{pa 5312 1062}%
\special{pa 5282 1054}%
\special{pa 5250 1048}%
\special{pa 5220 1042}%
\special{pa 5188 1036}%
\special{pa 5156 1032}%
\special{pa 5126 1028}%
\special{pa 5094 1024}%
\special{pa 5062 1022}%
\special{pa 5030 1020}%
\special{pa 4998 1018}%
\special{pa 4966 1016}%
\special{pa 4934 1014}%
\special{pa 4902 1014}%
\special{pa 4870 1012}%
\special{pa 4838 1012}%
\special{pa 4804 1012}%
\special{pa 4772 1012}%
\special{pa 4740 1012}%
\special{pa 4708 1012}%
\special{pa 4676 1012}%
\special{pa 4660 1012}%
\special{sp}%
% SPLINE 2 0 3 0
% 5 4659 1011 4323 1099 3875 1243 3875 1243 3875 1243
% 
\special{pn 8}%
\special{pa 4660 1012}%
\special{pa 4628 1020}%
\special{pa 4598 1028}%
\special{pa 4566 1034}%
\special{pa 4536 1042}%
\special{pa 4504 1050}%
\special{pa 4474 1058}%
\special{pa 4442 1066}%
\special{pa 4412 1076}%
\special{pa 4380 1084}%
\special{pa 4350 1092}%
\special{pa 4320 1100}%
\special{pa 4288 1110}%
\special{pa 4258 1118}%
\special{pa 4228 1128}%
\special{pa 4196 1138}%
\special{pa 4166 1148}%
\special{pa 4136 1156}%
\special{pa 4106 1166}%
\special{pa 4074 1176}%
\special{pa 4044 1186}%
\special{pa 4014 1196}%
\special{pa 3984 1208}%
\special{pa 3954 1218}%
\special{pa 3922 1228}%
\special{pa 3892 1238}%
\special{pa 3876 1244}%
\special{sp}%
% STR 2 0 3 0
% 3 4523 1215 4523 1295 5 0
% $\bullet$
\put(45.2300,-12.9500){\makebox(0,0){$\bullet$}}%
% ELLIPSE 2 0 3 0
% 4 4683 1291 4955 1227 4619 1227 4619 1227
% 
\special{pn 8}%
\special{ar 4684 1292 272 64  0.0000000 6.2831853}%
% SPLINE 2 0 3 0
% 4 4659 1011 4675 1099 4619 1235 4619 1235
% 
\special{pn 8}%
\special{pa 4660 1012}%
\special{pa 4668 1044}%
\special{pa 4674 1074}%
\special{pa 4676 1106}%
\special{pa 4670 1136}%
\special{pa 4658 1166}%
\special{pa 4644 1196}%
\special{pa 4626 1224}%
\special{pa 4620 1236}%
\special{sp}%
% STR 2 0 3 0
% 3 4810 1050 4810 1130 5 0
% $g_1^{\sigma_1}$
\put(48.1000,-11.3000){\makebox(0,0){$g_1^{\sigma_1}$}}%
% STR 2 0 3 0
% 3 4130 940 4130 1020 5 0
% $b_1^{\sigma_1}$
\put(41.3000,-10.2000){\makebox(0,0){$b_1^{\sigma_1}$}}%
% STR 2 0 3 0
% 3 5915 1319 5915 1399 5 0
% $*_t$
\put(59.1500,-13.9900){\makebox(0,0){$*_t$}}%
% STR 2 0 3 0
% 3 3793 1047 3793 1127 5 0
% $0$
\put(37.9300,-11.2700){\makebox(0,0){$0$}}%
% STR 2 0 3 0
% 3 4843 1215 4843 1295 5 0
% $\bullet$
\put(48.4300,-12.9500){\makebox(0,0){$\bullet$}}%
% CIRCLE 2 0 3 0
% 4 3803 1311 3899 1287 4011 1127 4011 1313
% 
\special{pn 8}%
\special{ar 3804 1312 100 100  0.0096151 5.5589353}%
% STR 2 0 3 0
% 3 3260 1220 3260 1300 5 0
% $\longrightarrow$
\put(32.6000,-13.0000){\makebox(0,0){$\longrightarrow$}}%
% STR 2 0 3 0
% 3 3260 1080 3260 1180 5 0
% $\sigma_1$
\put(32.6000,-11.8000){\makebox(0,0){$\sigma_1$}}%
% STR 2 0 3 0
% 3 3910 1220 3910 1300 5 0
% $\bullet$
\put(39.1000,-13.0000){\makebox(0,0){$\bullet$}}%
% STR 2 0 3 0
% 3 5910 1220 5910 1300 5 0
% $\times$
\put(59.1000,-13.0000){\makebox(0,0){$\times$}}%
% STR 2 0 3 0
% 3 2890 1220 2890 1300 5 0
% $\times$
\put(28.9000,-13.0000){\makebox(0,0){$\times$}}%
\end{picture}%
 \vspace{0.5cm}
   \caption{$\{y=\gamma_1-\varepsilon\}\cap \Bbb{C}^2_t$}\label{h1}
\end{figure}
\noindent
Let $g_{i,1}$ be the pull-back of $g_1$
starting from $*_i$ for $i=0,\dots,p-1$.
More precisely,  
$g_{i,1}=a_{i,2}\cdots a_{i,q}$ and
$\omega_i=a_{i,1}g_{i,1}$.\\
When $y_1=y-\gamma_1=-\varepsilon\exp( \sqrt{-1}\theta)$
moves from  $\theta=0$ to $2\pi$,
the generators $a_{0,1},\dots,a_{p-1,1}$ moves  an arc of the angle
$2\pi /p$ centered at the origin (the lifts of $b_1^{\sigma_1}$)
and the other generators do not move.
Thus we have following monodromy relations:
\[
(2{\text{-}}1)\quad
a_{i,1}=a_{i,1}^{\sigma_1}=
       \begin{cases}
        g_{i+1,1}^{-1}\, a_{i+1,1}\, g_{i+1,1}  & 0\le i\le p-2\\
        \Omega \,g_{0,1}^{-1}\,a_{0,1}\,(\Omega\, g_{0,1}^{-1})^{-1} & i = p-1
        \end{cases}
 \]
 and $g_{i,1}^{\sigma_1}=g_{i,1}$ for $i=0,\dots, p-1$.
 See  Figure \ref{h2}.
\begin{figure}[H]
\centering
\input{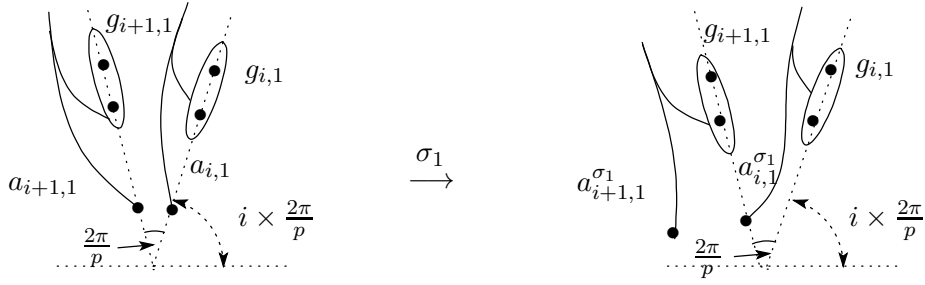}
 \vspace{0.3cm}
  \caption{The action of $\sigma_1$}\label{h2}
\end{figure}
\noindent
By the previous argument, we have $\omega_1=\omega$ and $\Omega=\omega^p$.
Hence we can rewrite the relations  $(2{\text{-}}1)$ as follows:
\[
(2{\text{-}}2)\quad
a_{i,1}=a_{i,1}^{\sigma_1}=
       \begin{cases}
        \omega^{-1} a_{i+1,1} \omega  & 0\le i\le p-2\\
        \omega^{p-1} a_{0,1} \omega^{-(p-1)}& i=p-1\\
        \end{cases}
 \]

%Next we consider that  j
%the action of generators by $\sigma_2$.
%Then
%$\psi(y,\alpha_1)<0$   and $\psi(y,\alpha_j)>0$ 
%              for $\gamma_1+\varepsilon <y <\gamma_2-\varepsilon$ and
%              $j\ge 2$.
%      See in Figure \ref{g-9} for the situation $y=\gamma_2-\varepsilon$.
%\begin{figure}[H]
%\centering
%\input{./g-9.tex}
% \vspace{0.5cm}
% \caption{}\label{g-9}
%\end{figure}
%We do a  similar argument for $a_{0,2},\dots,a_{p-1,2}$
%as in the case $j=1$.

Now we consider the case $j\ge 2$.
First we deform the pencil from $\gamma_0$ to $\gamma_j-\varepsilon$
along $K_j$.
Note that  
\[
\psi(y,\alpha_k)
 \begin{cases}
  <0 & k < j\\
  >0 & k > j.
 \end{cases}
\]
where  $y\in [\gamma_{j-1}+\varepsilon,  \gamma_{j}-\varepsilon]$.
Thus the generators $b_1,\dots,b_q$ are deformed as in Figure \ref{h9}  where $\psi_j:=\psi(y,\alpha_j)$.
\begin{figure}[H]
\hspace{-1cm}
 \input{./q8.tex}
 \vspace{0.2cm}
 \caption{$\{y=\gamma_j-\varepsilon\}\cap \Bbb{C}^2_t$}\label{h9}
\end{figure}
\noindent
When $y$ moves along $S_j: |y-\gamma_j|=\varepsilon$,
the single root of $g_{\alpha_j}(x,y)=0$
which is near the origin
goes around the origin once and the other roots $g_{\alpha_k}(x,y)=0\
(k\ne j)$
do not move as in Figure \ref{h10}  where $\psi_j:=\psi(y,\alpha_j)$.
\begin{figure}[H]
\hspace{-1cm}
 \input{./q9.tex}
 \vspace{0.3cm}
 \caption{$\{y=\gamma_j-\varepsilon\}\cap \Bbb{C}^2_t$}\label{h10}
\end{figure}
\noindent
This implies, by taking  $p$-fold covering,
   the corresponding generators $a_{0,j},\dots,a_{p-1,j}$ of $b_j$
   moves  an arc of the angle $2\pi /p$ centered at the origin.\\
To see it more precisely, we put new loops:
 \begin{alignat*}{1}
  h_{j}=\begin{cases}
            e                          & j=1   \\
            b_{1}\cdots b_{j-1}    & 2\le j \le q
         \end{cases},
 &\qquad
  g_{j}=\begin{cases}
            b_{j+1}\cdots b_{q}    & 1\le j \le q-1\\
            e                          & j=q.
          \end{cases}
\end{alignat*}
By the definitions,
we have $\tau=h_{j}b_{j}g_{j}$.
See Figure \ref{g-10}.
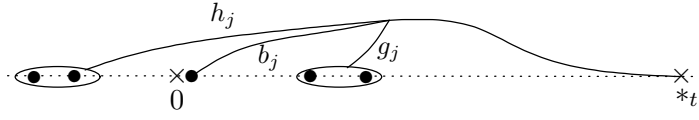
\begin{figure}[H]
\centering
%WinTpicVersion3.08
\unitlength 0.1in
\begin{picture}( 43.0100,  4.5500)(  8.0900,-19.1600)
% LINE 2 2 3 0
% 2 1523 1857 5110 1862
% 
\special{pn 8}%
\special{pa 1524 1858}%
\special{pa 5110 1862}%
\special{dt 0.045}%
% SPLINE 2 0 3 0
% 4 3521 1567 3419 1715 3302 1815 3302 1815
% 
\special{pn 8}%
\special{pa 3522 1568}%
\special{pa 3504 1596}%
\special{pa 3488 1622}%
\special{pa 3470 1650}%
\special{pa 3452 1676}%
\special{pa 3432 1700}%
\special{pa 3412 1724}%
\special{pa 3388 1746}%
\special{pa 3364 1768}%
\special{pa 3340 1788}%
\special{pa 3314 1808}%
\special{pa 3302 1816}%
\special{sp}%
% SPLINE 2 0 3 0
% 5 3521 1567 2900 1628 2052 1785 1930 1827 1930 1827
% 
\special{pn 8}%
\special{pa 3522 1568}%
\special{pa 3490 1570}%
\special{pa 3458 1574}%
\special{pa 3426 1576}%
\special{pa 3394 1580}%
\special{pa 3362 1582}%
\special{pa 3330 1586}%
\special{pa 3298 1588}%
\special{pa 3266 1592}%
\special{pa 3236 1594}%
\special{pa 3204 1598}%
\special{pa 3172 1600}%
\special{pa 3140 1604}%
\special{pa 3108 1606}%
\special{pa 3076 1610}%
\special{pa 3044 1614}%
\special{pa 3012 1616}%
\special{pa 2980 1620}%
\special{pa 2948 1624}%
\special{pa 2916 1626}%
\special{pa 2884 1630}%
\special{pa 2852 1634}%
\special{pa 2820 1638}%
\special{pa 2788 1642}%
\special{pa 2756 1644}%
\special{pa 2724 1648}%
\special{pa 2694 1652}%
\special{pa 2662 1656}%
\special{pa 2630 1662}%
\special{pa 2598 1666}%
\special{pa 2566 1670}%
\special{pa 2534 1676}%
\special{pa 2502 1680}%
\special{pa 2470 1686}%
\special{pa 2440 1692}%
\special{pa 2408 1696}%
\special{pa 2376 1704}%
\special{pa 2344 1710}%
\special{pa 2314 1716}%
\special{pa 2282 1722}%
\special{pa 2252 1730}%
\special{pa 2220 1738}%
\special{pa 2190 1746}%
\special{pa 2158 1754}%
\special{pa 2128 1762}%
\special{pa 2096 1772}%
\special{pa 2066 1782}%
\special{pa 2036 1790}%
\special{pa 2006 1802}%
\special{pa 1976 1812}%
\special{pa 1946 1822}%
\special{pa 1930 1828}%
\special{sp}%
% SPLINE 2 0 3 0
% 5 3521 1567 3177 1630 2698 1737 2489 1857 2489 1857
% 
\special{pn 8}%
\special{pa 3522 1568}%
\special{pa 3490 1574}%
\special{pa 3458 1580}%
\special{pa 3428 1586}%
\special{pa 3396 1592}%
\special{pa 3364 1598}%
\special{pa 3334 1604}%
\special{pa 3302 1610}%
\special{pa 3270 1616}%
\special{pa 3238 1620}%
\special{pa 3208 1626}%
\special{pa 3176 1630}%
\special{pa 3144 1636}%
\special{pa 3112 1640}%
\special{pa 3080 1646}%
\special{pa 3048 1650}%
\special{pa 3016 1654}%
\special{pa 2984 1660}%
\special{pa 2952 1666}%
\special{pa 2920 1672}%
\special{pa 2888 1678}%
\special{pa 2858 1686}%
\special{pa 2826 1694}%
\special{pa 2796 1702}%
\special{pa 2766 1712}%
\special{pa 2736 1722}%
\special{pa 2706 1734}%
\special{pa 2678 1748}%
\special{pa 2648 1762}%
\special{pa 2620 1776}%
\special{pa 2594 1792}%
\special{pa 2566 1810}%
\special{pa 2538 1826}%
\special{pa 2512 1844}%
\special{pa 2490 1858}%
\special{sp}%
% STR 2 0 3 0
% 3 1874 1752 1874 1866 5 0
% {\large{$\bullet$}}
\put(18.7400,-18.6600){\makebox(0,0){{\large{$\bullet$}}}}%
% STR 2 0 3 0
% 3 2489 1752 2489 1866 5 0
% {\large{$\bullet$}}
\put(24.8900,-18.6600){\makebox(0,0){{\large{$\bullet$}}}}%
% STR 2 0 3 0
% 3 3109 1754 3109 1867 5 0
% {\large{$\bullet$}}
\put(31.0900,-18.6700){\makebox(0,0){{\large{$\bullet$}}}}%
% SPLINE 2 0 3 0
% 7 3521 1567 3963 1601 4227 1725 5046 1862 5046 1862 5046 1862 5046 1862
% 
\special{pn 8}%
\special{pa 3522 1568}%
\special{pa 3554 1566}%
\special{pa 3588 1564}%
\special{pa 3620 1564}%
\special{pa 3652 1564}%
\special{pa 3686 1564}%
\special{pa 3718 1564}%
\special{pa 3750 1564}%
\special{pa 3782 1566}%
\special{pa 3814 1570}%
\special{pa 3846 1574}%
\special{pa 3876 1580}%
\special{pa 3908 1586}%
\special{pa 3938 1594}%
\special{pa 3968 1602}%
\special{pa 3998 1614}%
\special{pa 4026 1626}%
\special{pa 4056 1640}%
\special{pa 4084 1654}%
\special{pa 4114 1668}%
\special{pa 4142 1684}%
\special{pa 4170 1698}%
\special{pa 4200 1712}%
\special{pa 4228 1726}%
\special{pa 4258 1738}%
\special{pa 4288 1750}%
\special{pa 4318 1762}%
\special{pa 4348 1772}%
\special{pa 4378 1782}%
\special{pa 4408 1790}%
\special{pa 4438 1798}%
\special{pa 4470 1806}%
\special{pa 4500 1812}%
\special{pa 4532 1818}%
\special{pa 4564 1824}%
\special{pa 4596 1828}%
\special{pa 4628 1834}%
\special{pa 4660 1838}%
\special{pa 4692 1840}%
\special{pa 4724 1844}%
\special{pa 4756 1846}%
\special{pa 4788 1850}%
\special{pa 4820 1852}%
\special{pa 4852 1854}%
\special{pa 4886 1856}%
\special{pa 4918 1858}%
\special{pa 4950 1858}%
\special{pa 4984 1860}%
\special{pa 5016 1862}%
\special{pa 5046 1862}%
\special{sp}%
% STR 2 0 3 0
% 3 5080 1856 5080 1970 5 0
% $*_t$
\put(50.8000,-19.7000){\makebox(0,0){$*_t$}}%
% STR 2 0 3 0
% 3 2654 1434 2654 1546 5 0
% {\small{$h_j$}}
\put(26.5400,-15.4600){\makebox(0,0){{\small{$h_j$}}}}%
% STR 2 0 3 0
% 3 2892 1654 2892 1766 5 0
% {\small{$b_j$}}
\put(28.9200,-17.6600){\makebox(0,0){{\small{$b_j$}}}}%
% STR 2 0 3 0
% 3 3521 1620 3521 1735 5 0
% {\small{$g_j$}}
\put(35.2100,-17.3500){\makebox(0,0){{\small{$g_j$}}}}%
% STR 2 0 3 0
% 3 2411 1870 2411 1984 5 0
% $0$
\put(24.1100,-19.8400){\makebox(0,0){$0$}}%
% STR 2 0 3 0
% 3 2411 1744 2411 1857 5 0
% $\times$
\put(24.1100,-18.5700){\makebox(0,0){$\times$}}%
% STR 2 0 3 0
% 3 3400 1755 3400 1869 5 0
% {\large{$\bullet$}}
\put(34.0000,-18.6900){\makebox(0,0){{\large{$\bullet$}}}}%
% ELLIPSE 2 0 3 0
% 4 3260 1863 3482 1810 3482 1810 3482 1810
% 
\special{pn 8}%
\special{ar 3260 1864 222 54  0.0000000 6.2831853}%
% ELLIPSE 2 0 3 0
% 4 1785 1863 2006 1810 2006 1810 2006 1810
% 
\special{pn 8}%
\special{ar 1786 1864 222 54  0.0000000 6.2831853}%
% STR 2 0 3 0
% 3 1664 1755 1664 1869 5 0
% {\large{$\bullet$}}
\put(16.6400,-18.6900){\makebox(0,0){{\large{$\bullet$}}}}%
% STR 2 0 3 0
% 3 5050 1746 5050 1860 5 0
% $\times$
\put(50.5000,-18.6000){\makebox(0,0){$\times$}}%
\end{picture}%
\vspace{0.5cm}
 \caption{New loops}\label{g-10}
\end{figure}
\noindent
We take the local coordinates
$(x,y_j):=(x,y-\gamma_j)$ centered at $Q_{j,0}$.
Then
\[
 f_{i}(x,y_j+\gamma_j)=0
 \Leftrightarrow
 x^p=
 \begin{cases}
 (1-p)y_j+H_j(y_j),\quad   & i=j,\\
 \frac{\gamma_j}{\alpha_j}(\alpha_j-\alpha_i)+H_i(y_j)  &i\ne j
 \end{cases}
\]
where
$\ord_{y_j}\, H_j\ge 2$ and $\ord_{y_j}\,H_i\ge 1$ for $i\ne j$.
By the assumption, we have
$\alpha_j-\alpha_i>0$ or
$\alpha_j-\alpha_i<0$ corresponding to either $i >j$ or $i<j$
respectively.
Thus we can see that 
the generators which are deformed under this monodromy are
$\{a_{i,j}\,|\,i=0,\dots,p-1\}$
when $y_j$ moves around the circle $|y_j|=\varepsilon$.
Thus  $a_{i,k}^{\sigma_j}=a_{i,k}$ for $k\ne j$.\\
Let $h_{i,j}$ and $g_{i,j}$ be the pull-back of
    $h_j$ and $g_j$ respectively.
By the definition, we have
    \begin{alignat*}{1}
  h_{i,j}= a_{i,1}\cdots a_{i,j-1}\\
 g_{i,j}= a_{i,j+1}\cdots a_{i,q} 
\end{alignat*}
%\begin{cases}
%            1                          & j=1   \\
 %           a_{i,1}\cdots a_{i,j-1}    & 2\le j \le q
%         \end{cases},
% &\qquad
%  g_{i,j}=\begin{cases}
%            a_{i,j+1}\cdots a_{i,q}    & 1\le j \le q-1\\
%            1                          & j=q
%          \end{cases}
%    \end{alignat*}
where $h_{i,1}=e$ and $g_{i,q}=e$
and we put  $\omega_i=h_{i,j}a_{i,j}g_{i,j}$.

When $y$ % -\gamma_j=-\varepsilon\exp(2\pi \sqrt{-1}\theta)$
moves around the circle $|y-\gamma_j|=\varepsilon$ once, 
% $0\le \theta \le 1$,
the generators $a_{0,j},\dots,a_{p-1,j}$ moves  an arc of the angle
$2\pi /p$ centered at the origin.
Thus  we have following monodromy relations:
\[
(2{\text{-}}3)\quad
a_{i,j}=a_{i,j}^{\sigma_j}=
       \begin{cases}
       ( g_{i+1,j}h_{i,j})^{-1}\, a_{i+1,j}\, g_{i+1,j}h_{i,j}  & 0\le i\le p-2\\
     h_{p-1,j}^{-1}\,\Omega\, g_{0,j}^{-1}\,a_{0,j}\,
        ( h_{p-1,j}^{-1}\Omega g_{0,j}^{-1})^{-1}
    & i = p-1
        \end{cases}
 \]
 and $g_{i,j}^{\sigma_j}=g_{i,j}$ and  $h_{i,j}^{\sigma_j}=h_{i,j}$
 for $i=0,\dots,p-1$.
 See  Figure \ref{h4}.
\begin{figure}[H]
%\centering
%\hspace{-0.3cm}
 \input{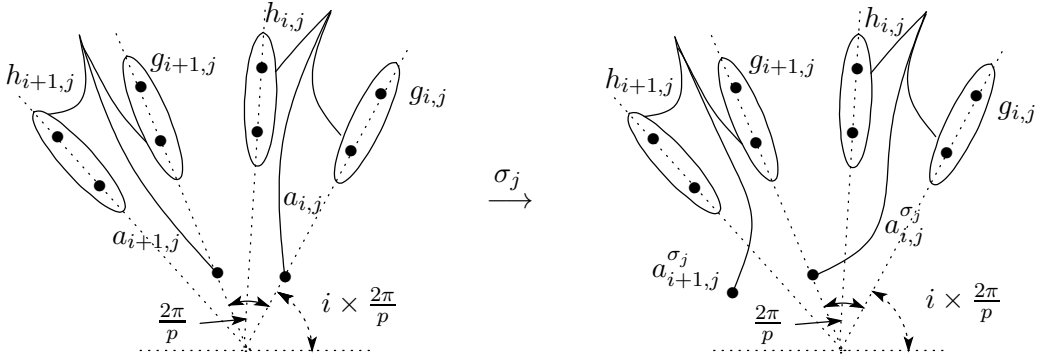}
 \vspace{0.3cm}
  \caption{The action of $\sigma_j$}\label{h4}
\end{figure}

%\newpage
\noindent{\bf{Other cases}}:
Finally we read the monodromy relations at
$y=\gamma$ where $\gamma\in \Sigma$ with
$\gamma\ne 0$, $\gamma_1,\dots,\gamma_q$.
Recall that 
the set of parameters
$\Sigma \subset \Bbb{C}_y$ which corresponds to singular pencils are
given by
\[
 \Sigma=\{0,\gamma_{j}\xi^{k}\in \Bbb{C}_y\,|\,
 k=0,\dots,p-2,\ j=1,\dots,q  \},\ \
 %\gamma_j=\alpha_j^{-\frac{1}{p-1}},\ \
\xi=\exp\left(\frac{2\pi \sqrt{-1}}{p-1}\right).
\]
Then the pencil line $L_{\gamma_{j}\xi^{k}}=\{y=\gamma_{j}\xi^{k}\}$ is
singular with respect to $C_j$ and 
$C_j\cap L_{\gamma_{j}\xi^{k}}=\{Q_{j,k}\}
           =\{(0,\gamma_j\xi^{k})\}$
is a flex point of $C_j$ of flex order $p-2$ for $k=1,\dots, p-2$.
Note that the pencil line $L_{\gamma_{j}\xi^{k}}$ is generic
with respect to other $C_i$ for $i\ne j$.

First we consider the case $k=1$.
That is, we consider the monodromy relations at $y=\gamma_j \xi$.
We take a path $L_1$ which connects $\gamma_0$ and $\gamma_0\xi$
as in Figure \ref{d6}.
\begin{figure}[H]
\centering
%WinTpicVersion3.08
\unitlength 0.1in
\begin{picture}( 27.0000,  7.7000)(  6.7000,-19.7500)
% LINE 2 2 3 0
% 2 670 1899 3370 1899
% 
\special{pn 8}%
\special{pa 670 1900}%
\special{pa 3370 1900}%
\special{dt 0.045}%
% CIRCLE 1 0 3 0
% 4 1915 1899 2335 1899 2935 1899 2935 1249
% 
\special{pn 13}%
\special{ar 1916 1900 420 420  5.7158220 6.2831853}%
% LINE 1 0 3 0
% 2 2335 1899 2635 1899
% 
\special{pn 13}%
\special{pa 2336 1900}%
\special{pa 2636 1900}%
\special{fp}%
% LINE 1 0 3 0
% 2 2270 1680 2526 1524
% 
\special{pn 13}%
\special{pa 2270 1680}%
\special{pa 2526 1524}%
\special{fp}%
% STR 2 0 3 0
% 3 2680 1910 2680 2060 5 0
% $\gamma_0$
\put(26.8000,-20.6000){\makebox(0,0){$\gamma_0$}}%
% STR 2 0 3 0
% 3 2635 1749 2635 1899 5 0
% $\bullet$
\put(26.3500,-18.9900){\makebox(0,0){$\bullet$}}%
% STR 2 0 3 0
% 3 2550 1360 2550 1510 5 0
% $\bullet$
\put(25.5000,-15.1000){\makebox(0,0){$\bullet$}}%
% STR 2 0 3 0
% 3 2570 1190 2570 1340 5 0
% $\gamma_0 \xi$
\put(25.7000,-13.4000){\makebox(0,0){$\gamma_0 \xi$}}%
% STR 2 0 3 0
% 3 2560 1600 2560 1750 5 0
% $L_1$
\put(25.6000,-17.5000){\makebox(0,0){$L_1$}}%
% STR 2 0 3 0
% 3 1920 1910 1920 2060 5 0
% $0$
\put(19.2000,-20.6000){\makebox(0,0){$0$}}%
% STR 2 0 3 0
% 3 1930 1749 1930 1899 5 0
% $\times$
\put(19.3000,-18.9900){\makebox(0,0){$\times$}}%
% LINE 2 2 3 0
% 2 1930 1900 2996 1205
% 
\special{pn 8}%
\special{pa 1930 1900}%
\special{pa 2996 1206}%
\special{dt 0.045}%
% LINE 2 0 3 0
% 2 2300 1730 2353 1771
% 
\special{pn 8}%
\special{pa 2300 1730}%
\special{pa 2354 1772}%
\special{fp}%
% LINE 2 0 3 0
% 2 2300 1730 2273 1791
% 
\special{pn 8}%
\special{pa 2300 1730}%
\special{pa 2274 1792}%
\special{fp}%
\end{picture}%
\vspace{0.5cm}
 \caption{The loop $L_1$}\label{d6}
\end{figure}
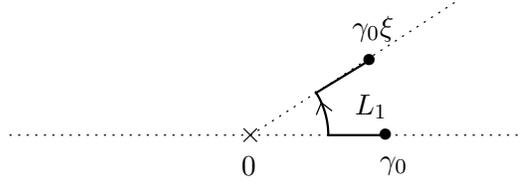
\noindent
Then the loops $b_1,\dots,b_q$ are deformed
as in the left side of  Figure \ref{d1}.% where $\psi_j:=\psi(y,\alpha_j)$.
We take new loops $c_1,\dots, c_q$ as in the right side of  Figure
\ref{d1}.
Here $\xi *_t=(\xi \omega_0^p,\xi \gamma_0)$.
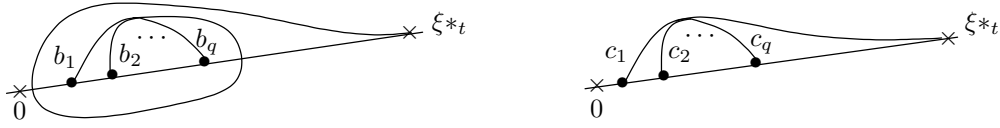
\begin{figure}[H]
\centering
{\small{
  %WinTpicVersion3.08
\unitlength 0.1in
\begin{picture}( 52.6700,  5.9900)(  8.8000,-20.7600)
% SPLINE 2 0 3 0
% 11 3287 1632 3168 1642 2431 1535 1992 1486 1425 1603 1308 1943 2088 2001 2398 1826 2388 1709 1832 1554 1832 1554
% 
\special{pn 8}%
\special{pa 3288 1632}%
\special{pa 3256 1636}%
\special{pa 3224 1638}%
\special{pa 3192 1642}%
\special{pa 3160 1642}%
\special{pa 3128 1644}%
\special{pa 3096 1644}%
\special{pa 3064 1642}%
\special{pa 3032 1640}%
\special{pa 3000 1638}%
\special{pa 2968 1634}%
\special{pa 2936 1632}%
\special{pa 2904 1626}%
\special{pa 2874 1622}%
\special{pa 2842 1616}%
\special{pa 2810 1612}%
\special{pa 2778 1606}%
\special{pa 2746 1598}%
\special{pa 2714 1592}%
\special{pa 2682 1586}%
\special{pa 2652 1580}%
\special{pa 2620 1572}%
\special{pa 2588 1566}%
\special{pa 2556 1560}%
\special{pa 2526 1554}%
\special{pa 2494 1546}%
\special{pa 2462 1542}%
\special{pa 2432 1536}%
\special{pa 2400 1530}%
\special{pa 2370 1526}%
\special{pa 2338 1522}%
\special{pa 2308 1516}%
\special{pa 2276 1512}%
\special{pa 2244 1510}%
\special{pa 2214 1506}%
\special{pa 2182 1502}%
\special{pa 2150 1500}%
\special{pa 2118 1496}%
\special{pa 2086 1494}%
\special{pa 2052 1492}%
\special{pa 2020 1488}%
\special{pa 1986 1486}%
\special{pa 1952 1484}%
\special{pa 1918 1482}%
\special{pa 1884 1480}%
\special{pa 1850 1478}%
\special{pa 1816 1478}%
\special{pa 1780 1478}%
\special{pa 1746 1478}%
\special{pa 1714 1480}%
\special{pa 1680 1484}%
\special{pa 1648 1490}%
\special{pa 1616 1496}%
\special{pa 1586 1506}%
\special{pa 1556 1516}%
\special{pa 1526 1530}%
\special{pa 1498 1544}%
\special{pa 1472 1562}%
\special{pa 1446 1582}%
\special{pa 1424 1606}%
\special{pa 1402 1632}%
\special{pa 1380 1660}%
\special{pa 1362 1690}%
\special{pa 1346 1722}%
\special{pa 1332 1756}%
\special{pa 1322 1788}%
\special{pa 1312 1820}%
\special{pa 1306 1854}%
\special{pa 1304 1884}%
\special{pa 1304 1914}%
\special{pa 1308 1940}%
\special{pa 1314 1964}%
\special{pa 1326 1986}%
\special{pa 1340 2004}%
\special{pa 1356 2022}%
\special{pa 1376 2036}%
\special{pa 1398 2046}%
\special{pa 1422 2056}%
\special{pa 1450 2064}%
\special{pa 1480 2070}%
\special{pa 1510 2074}%
\special{pa 1544 2076}%
\special{pa 1580 2078}%
\special{pa 1616 2076}%
\special{pa 1654 2074}%
\special{pa 1692 2072}%
\special{pa 1732 2066}%
\special{pa 1772 2062}%
\special{pa 1814 2056}%
\special{pa 1856 2048}%
\special{pa 1898 2040}%
\special{pa 1940 2032}%
\special{pa 1982 2024}%
\special{pa 2024 2016}%
\special{pa 2064 2006}%
\special{pa 2106 1998}%
\special{pa 2146 1988}%
\special{pa 2184 1980}%
\special{pa 2220 1968}%
\special{pa 2256 1958}%
\special{pa 2288 1944}%
\special{pa 2316 1930}%
\special{pa 2342 1914}%
\special{pa 2364 1894}%
\special{pa 2380 1872}%
\special{pa 2392 1846}%
\special{pa 2400 1818}%
\special{pa 2402 1786}%
\special{pa 2400 1752}%
\special{pa 2392 1720}%
\special{pa 2382 1692}%
\special{pa 2368 1666}%
\special{pa 2350 1642}%
\special{pa 2330 1622}%
\special{pa 2308 1606}%
\special{pa 2282 1592}%
\special{pa 2254 1580}%
\special{pa 2224 1570}%
\special{pa 2192 1562}%
\special{pa 2160 1556}%
\special{pa 2124 1552}%
\special{pa 2088 1550}%
\special{pa 2050 1548}%
\special{pa 2012 1548}%
\special{pa 1972 1548}%
\special{pa 1932 1550}%
\special{pa 1890 1552}%
\special{pa 1850 1554}%
\special{pa 1832 1554}%
\special{sp}%
% LINE 2 0 3 0
% 2 1170 1950 3347 1630
% 
\special{pn 8}%
\special{pa 1170 1950}%
\special{pa 3348 1630}%
\special{fp}%
% STR 2 0 3 0
% 3 3489 1486 3489 1583 5 0
% $\xi *_t$
\put(34.8900,-15.8300){\makebox(0,0){$\xi *_t$}}%
% STR 2 0 3 0
% 3 1511 1797 1511 1894 5 0
% $\bullet$
\put(15.1100,-18.9400){\makebox(0,0){$\bullet$}}%
% STR 2 0 3 0
% 3 1724 1759 1724 1855 5 0
% $\bullet$
\put(17.2400,-18.5500){\makebox(0,0){$\bullet$}}%
% STR 2 0 3 0
% 3 2207 1690 2207 1787 5 0
% $\bullet$
\put(22.0700,-17.8700){\makebox(0,0){$\bullet$}}%
% SPLINE 2 0 3 0
% 4 1864 1554 1842 1554 1522 1885 1522 1885
% 
\special{pn 8}%
\special{pa 1864 1554}%
\special{pa 1832 1556}%
\special{pa 1802 1560}%
\special{pa 1774 1568}%
\special{pa 1748 1582}%
\special{pa 1722 1598}%
\special{pa 1698 1616}%
\special{pa 1676 1640}%
\special{pa 1654 1664}%
\special{pa 1634 1690}%
\special{pa 1614 1720}%
\special{pa 1596 1750}%
\special{pa 1578 1782}%
\special{pa 1560 1816}%
\special{pa 1542 1850}%
\special{pa 1524 1882}%
\special{pa 1522 1886}%
\special{sp}%
% SPLINE 2 0 3 0
% 4 1864 1554 1789 1573 1713 1865 1713 1865
% 
\special{pn 8}%
\special{pa 1864 1554}%
\special{pa 1832 1560}%
\special{pa 1800 1568}%
\special{pa 1776 1582}%
\special{pa 1754 1600}%
\special{pa 1740 1624}%
\special{pa 1728 1652}%
\special{pa 1720 1682}%
\special{pa 1716 1716}%
\special{pa 1712 1754}%
\special{pa 1712 1792}%
\special{pa 1712 1832}%
\special{pa 1714 1866}%
\special{sp}%
% SPLINE 2 0 3 0
% 4 1864 1554 2066 1632 2226 1787 2226 1787
% 
\special{pn 8}%
\special{pa 1864 1554}%
\special{pa 1896 1564}%
\special{pa 1928 1572}%
\special{pa 1958 1582}%
\special{pa 1988 1594}%
\special{pa 2018 1606}%
\special{pa 2046 1620}%
\special{pa 2072 1636}%
\special{pa 2098 1654}%
\special{pa 2122 1676}%
\special{pa 2146 1698}%
\special{pa 2168 1720}%
\special{pa 2190 1744}%
\special{pa 2212 1770}%
\special{pa 2226 1788}%
\special{sp}%
% LINE 2 0 3 0
% 2 4197 1925 6147 1655
% 
\special{pn 8}%
\special{pa 4198 1926}%
\special{pa 6148 1656}%
\special{fp}%
% STR 2 0 3 0
% 3 6277 1488 6277 1585 5 0
% $\xi *_t$
\put(62.7700,-15.8500){\makebox(0,0){$\xi *_t$}}%
% STR 2 0 3 0
% 3 4395 1801 4395 1898 5 0
% $\bullet$
\put(43.9500,-18.9800){\makebox(0,0){$\bullet$}}%
% STR 2 0 3 0
% 3 4609 1762 4609 1859 5 0
% $\bullet$
\put(46.0900,-18.5900){\makebox(0,0){$\bullet$}}%
% STR 2 0 3 0
% 3 5091 1695 5091 1792 5 0
% $\bullet$
\put(50.9100,-17.9200){\makebox(0,0){$\bullet$}}%
% SPLINE 2 0 3 0
% 4 4749 1559 4728 1559 4406 1890 4406 1890
% 
\special{pn 8}%
\special{pa 4750 1560}%
\special{pa 4718 1560}%
\special{pa 4688 1564}%
\special{pa 4660 1574}%
\special{pa 4632 1586}%
\special{pa 4608 1602}%
\special{pa 4584 1622}%
\special{pa 4562 1644}%
\special{pa 4540 1668}%
\special{pa 4520 1696}%
\special{pa 4500 1724}%
\special{pa 4480 1756}%
\special{pa 4462 1788}%
\special{pa 4444 1820}%
\special{pa 4426 1854}%
\special{pa 4408 1888}%
\special{pa 4406 1890}%
\special{sp}%
% SPLINE 2 0 3 0
% 4 4749 1559 4674 1578 4598 1870 4598 1870
% 
\special{pn 8}%
\special{pa 4750 1560}%
\special{pa 4716 1566}%
\special{pa 4686 1574}%
\special{pa 4660 1586}%
\special{pa 4640 1606}%
\special{pa 4624 1628}%
\special{pa 4612 1656}%
\special{pa 4604 1688}%
\special{pa 4600 1722}%
\special{pa 4598 1758}%
\special{pa 4598 1796}%
\special{pa 4598 1836}%
\special{pa 4598 1870}%
\special{sp}%
% SPLINE 2 0 3 0
% 4 4749 1559 4952 1637 5112 1792 5112 1792
% 
\special{pn 8}%
\special{pa 4750 1560}%
\special{pa 4782 1568}%
\special{pa 4812 1578}%
\special{pa 4844 1588}%
\special{pa 4874 1598}%
\special{pa 4902 1610}%
\special{pa 4930 1624}%
\special{pa 4958 1640}%
\special{pa 4984 1660}%
\special{pa 5008 1680}%
\special{pa 5030 1702}%
\special{pa 5054 1724}%
\special{pa 5074 1748}%
\special{pa 5096 1774}%
\special{pa 5112 1792}%
\special{sp}%
% SPLINE 2 0 3 0
% 7 4674 1572 4706 1562 4930 1562 5229 1621 5796 1669 6063 1660 6063 1660
% 
\special{pn 8}%
\special{pa 4674 1572}%
\special{pa 4706 1562}%
\special{pa 4736 1556}%
\special{pa 4768 1552}%
\special{pa 4800 1550}%
\special{pa 4832 1552}%
\special{pa 4864 1554}%
\special{pa 4896 1558}%
\special{pa 4928 1562}%
\special{pa 4960 1568}%
\special{pa 4992 1574}%
\special{pa 5024 1580}%
\special{pa 5056 1586}%
\special{pa 5086 1592}%
\special{pa 5118 1598}%
\special{pa 5150 1606}%
\special{pa 5180 1612}%
\special{pa 5212 1618}%
\special{pa 5242 1624}%
\special{pa 5274 1630}%
\special{pa 5306 1634}%
\special{pa 5338 1640}%
\special{pa 5370 1644}%
\special{pa 5400 1648}%
\special{pa 5432 1652}%
\special{pa 5464 1654}%
\special{pa 5496 1658}%
\special{pa 5528 1660}%
\special{pa 5560 1662}%
\special{pa 5592 1664}%
\special{pa 5624 1666}%
\special{pa 5656 1668}%
\special{pa 5688 1668}%
\special{pa 5720 1670}%
\special{pa 5754 1670}%
\special{pa 5786 1670}%
\special{pa 5818 1670}%
\special{pa 5850 1668}%
\special{pa 5882 1668}%
\special{pa 5914 1666}%
\special{pa 5946 1666}%
\special{pa 5978 1664}%
\special{pa 6010 1664}%
\special{pa 6042 1662}%
\special{pa 6064 1660}%
\special{sp}%
% STR 2 0 3 0
% 3 1478 1661 1478 1759 5 0
% $b_1$
\put(14.7800,-17.5900){\makebox(0,0){$b_1$}}%
% STR 2 0 3 0
% 3 1820 1652 1820 1749 5 0
% $b_2$
\put(18.2000,-17.4900){\makebox(0,0){$b_2$}}%
% STR 2 0 3 0
% 3 2219 1583 2219 1680 5 0
% $b_q$
\put(22.1900,-16.8000){\makebox(0,0){$b_q$}}%
% STR 2 0 3 0
% 3 4363 1651 4363 1748 5 0
% $c_1$
\put(43.6300,-17.4800){\makebox(0,0){$c_1$}}%
% STR 2 0 3 0
% 3 4694 1651 4694 1748 5 0
% $c_2$
\put(46.9400,-17.4800){\makebox(0,0){$c_2$}}%
% STR 2 0 3 0
% 3 5122 1595 5122 1693 5 0
% $c_q$
\put(51.2200,-16.9300){\makebox(0,0){$c_q$}}%
% STR 2 0 3 0
% 3 1926 1554 1940 1660 5 0
% $\cdots$
\put(19.4000,-16.6000){\special{rt 0 0  6.1519}\makebox(0,0){$\cdots$}}%
\special{rt 0 0 0}%
% STR 2 0 3 0
% 3 4795 1543 4810 1650 5 0
% $\cdots$
\put(48.1000,-16.5000){\special{rt 0 0  6.1439}\makebox(0,0){$\cdots$}}%
\special{rt 0 0 0}%
% STR 2 0 3 0
% 3 1240 1940 1240 2040 5 0
% $0$
\put(12.4000,-20.4000){\makebox(0,0){$0$}}%
% STR 2 0 3 0
% 3 4260 1930 4260 2030 5 0
% $0$
\put(42.6000,-20.3000){\makebox(0,0){$0$}}%
% STR 2 0 3 0
% 3 1240 1840 1240 1940 5 0
% $\times$
\put(12.4000,-19.4000){\makebox(0,0){$\times$}}%
% STR 2 0 3 0
% 3 4260 1810 4260 1910 5 0
% $\times$
\put(42.6000,-19.1000){\makebox(0,0){$\times$}}%
% STR 2 0 3 0
% 3 6100 1560 6100 1660 5 0
% $\times$
\put(61.0000,-16.6000){\makebox(0,0){$\times$}}%
% STR 2 0 3 0
% 3 3280 1530 3280 1630 5 0
% $\times$
\put(32.8000,-16.3000){\makebox(0,0){$\times$}}%
\end{picture}%
 }}
%\vspace{0.5cm}
 \caption{New loops $c_1,\dots, c_q$.}\label{d1}
\end{figure}
\noindent
They are related by the following.
\begin{eqnarray}\label{a1}
c_j=\tau^{-1}b_j \tau,\quad j=1,\dots,q. 
\end{eqnarray}
Let $d_{0,j},\dots,d_{p-1,j}$ be the pull-back of $c_j$ by $\varphi_p$ for
$j=1,\dots,q$. 
Then the relation (\ref{a1}) implies 
\begin{eqnarray}\label{a2}
d_{i,j}=\omega^{-1}a_{i,j}\omega.
\end{eqnarray} 
Now we consider the loops 
$\sigma_1^{(1)},\dots,\sigma_q^{(1)}$ in
$\Bbb{C}_y$ with base point $\gamma_0\xi$ as in Figure
\ref{d2}.
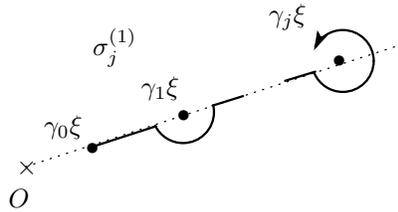
\begin{figure}[H]
\centering
 {\small{%WinTpicVersion3.08
\unitlength 0.1in
\begin{picture}( 24.0800,  9.6200)( 14.6000,-27.1500)
% STR 2 0 3 0
% 3 1898 2531 1900 2640 5 0
% $\times$
\put(19.0000,-26.4000){\special{rt 0 0  6.2648}\makebox(0,0){$\times$}}%
\special{rt 0 0 0}%
% STR 2 0 3 0
% 3 1860 2691 1860 2800 5 0
% $O$
\put(18.6000,-28.0000){\makebox(0,0){$O$}}%
% LINE 2 2 3 0
% 2 1893 2636 3868 1994
% 
\special{pn 8}%
\special{pa 1894 2636}%
\special{pa 3868 1994}%
\special{dt 0.045}%
% STR 2 0 3 0
% 3 3552 1979 3536 2085 5 0
% $\bullet$
\put(35.3600,-20.8500){\special{rt 0 0  0.1498}\makebox(0,0){$\bullet$}}%
\special{rt 0 0 0}%
% LINE 1 0 3 0
% 2 2235 2534 2576 2424
% 
\special{pn 13}%
\special{pa 2236 2534}%
\special{pa 2576 2424}%
\special{fp}%
% CIRCLE 1 0 3 0
% 4 2718 2349 2879 2339 2566 2423 2998 2285
% 
\special{pn 13}%
\special{ar 2718 2350 162 162  6.0584741 6.2831853}%
\special{ar 2718 2350 162 162  0.0000000 2.6885267}%
% STR 2 0 3 0
% 3 3263 1740 3265 1849 5 0
% $\gamma_j\xi$
\put(32.6500,-18.4900){\special{rt 0 0  6.2648}\makebox(0,0){$\gamma_j\xi$}}%
\special{rt 0 0 0}%
% STR 2 0 3 0
% 3 2360 1902 2360 2010 5 0
% $\sigma_j^{(1)}$
\put(23.6000,-20.1000){\makebox(0,0){$\sigma_j^{(1)}$}}%
% STR 2 0 3 0
% 3 2090 2308 2090 2428 5 0
% $\gamma_0 \xi$
\put(20.9000,-24.2800){\makebox(0,0){$\gamma_0 \xi$}}%
% STR 2 0 3 0
% 3 2244 2433 2244 2540 5 0
% $\bullet$
\put(22.4400,-25.4000){\makebox(0,0){$\bullet$}}%
% CIRCLE 1 0 3 0
% 4 3554 2082 3645 1949 3414 2143 3451 2029
% 
\special{pn 13}%
\special{ar 3554 2082 162 162  3.6168228 6.2831853}%
\special{ar 3554 2082 162 162  0.0000000 2.7306820}%
% SARROW 1 0 3 1
% 2 3416 1997 3411 2007
% 
\special{pn 13}%
\special{pa 3416 1998}%
\special{pa 3412 2008}%
\special{fp}%
\special{sh 1}%
\special{pa 3412 2008}%
\special{pa 3460 1956}%
\special{pa 3436 1960}%
\special{pa 3424 1938}%
\special{pa 3412 2008}%
\special{fp}%
% LINE 1 0 3 0
% 2 2872 2315 3031 2266
% 
\special{pn 13}%
\special{pa 2872 2316}%
\special{pa 3032 2266}%
\special{fp}%
% LINE 1 0 3 0
% 2 3251 2186 3410 2137
% 
\special{pn 13}%
\special{pa 3252 2186}%
\special{pa 3410 2138}%
\special{fp}%
% STR 2 0 3 0
% 3 2754 2265 2721 2367 5 0
% $\bullet$
\put(27.2100,-23.6700){\special{rt 0 0  0.3129}\makebox(0,0){$\bullet$}}%
\special{rt 0 0 0}%
% STR 2 0 3 0
% 3 2598 2118 2598 2225 5 0
% $\gamma_1\xi$
\put(25.9800,-22.2500){\makebox(0,0){$\gamma_1\xi$}}%
\end{picture}%}}
 \vspace{0.5cm}
  \caption{The loop $\sigma_j^{(1)}$}\label{d2}
\end{figure}
\noindent
We will see that  the monodromy relations are exactly as
%$(1{\text{-}}2)$ and
$(2{\text{-}}3)$.
To see this assertion, we take the following modified  coordinates $(\tilde{x},\tilde{y})$
which are defined by 
\begin{eqnarray*}
\tilde{x}:=\exp\left(\frac{-2\pi \sqrt{-1}}{p(p-1)}\right)x,\quad
\tilde{y}:=\xi y.
\end{eqnarray*}  
In these  coordinates, the loops $\sigma_1^{(1)},\dots, \sigma_q^{(1)}$ 
coincide with $\sigma_1,\dots, \sigma_q$ and 
$C_j$ is defined by the same equality:  
\[
 C_j:\quad \tilde{x}^p=\tilde{y}(1-\alpha_j\tilde{y}^{p-1}).
\]
The situation of loops  $c_{1},\dots,c_q$ are the same  with that of
$b_1,\dots,b_q$ and
the situation of loops
$d_{i,j},i=0,\dots,p-1, j=1,\dots,q$ are the same with that of
$a_{i,j},i=0,\dots,p-1, j=1,\dots,q$.
 Therefore we obtain the relations
\begin{equation*}
 \begin{split}
%(1{\text{-}}2)'&\quad d_{i,j}=
%\begin{cases}
%   \omega^{p-1}d_{i+1,j}\omega^{-(p-1)} & 0\le i\le p-2,\\
%   \omega^{2p-1}d_{0,j}(\omega^{2p-1})^{-1} &
%  i= p-1,
%\end{cases},\quad j=1,\dots,q \\
(2{\text{-}}3)'&\quad
d_{i,j}=
       \begin{cases}
       ( \tilde{g}_{i+1,j}\tilde{h}_{i,j})^{-1}\, d_{i+1,j}\, 
\tilde{g}_{i+1,j}\tilde{h}_{i,j}  & 0\le i\le p-2\\
     \tilde{h}_{p-1,j}^{-1}\,\tilde{\Omega}\, \tilde{g}_{0,j}^{-1}\,d_{0,j}\,
        ( \tilde{h}_{p-1,j}^{-1}\tilde{\Omega} \tilde{g}_{0,j}^{-1})^{-1}
    & i = p-1
        \end{cases},\quad j=1,\dots,q\\
\end{split}
\end{equation*}
where $\tilde{h}_{i,j}:=d_{i,1}\cdots d_{i,j-1}$,
      $\tilde{g}_{i,j}:=d_{i,j+1}\cdots d_{i,q}$ and
      $\tilde{\Omega}:=\omega^{-1}\Omega \omega$.
Now we claim the following.
\begin{lemma}\label{a3}
The relation  $(2{\text{-}}3)'$ is the same with 
the relation  $(2{\text{-}}3)$.
\end{lemma}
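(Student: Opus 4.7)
The plan is to show that $(2\text{-}3)'$ is obtained from $(2\text{-}3)$ by simultaneously conjugating every generator appearing in the relation by $\omega$, so that the two sets of relations are equivalent in the fundamental group.

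First I would record the translation dictionary between the ``tilded'' and ``untilded'' loops. By the definition $d_{i,j}=\omega^{-1}a_{i,j}\omega$ from (\ref{a2}), and since conjugation by $\omega$ is an automorphism, we have
\[
\tilde h_{i,j}=d_{i,1}\cdots d_{i,j-1}=\omega^{-1}(a_{i,1}\cdots a_{i,j-1})\omega=\omega^{-1}h_{i,j}\omega,
\]
and similarly $\tilde g_{i,j}=\omega^{-1}g_{i,j}\omega$. Moreover $\tilde\Omega=\omega^{-1}\Omega\omega=\Omega$, because $\Omega=\omega^{p}$ lies in the center generated by $\omega$.

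Next I would substitute these identities into the two cases of $(2\text{-}3)'$ and simplify. For $0\le i\le p-2$, the right-hand side of $(2\text{-}3)'$ becomes
\[
(\omega^{-1}g_{i+1,j}h_{i,j}\omega)^{-1}\,\omega^{-1}a_{i+1,j}\omega\,(\omega^{-1}g_{i+1,j}h_{i,j}\omega)=\omega^{-1}\bigl((g_{i+1,j}h_{i,j})^{-1}a_{i+1,j}\,g_{i+1,j}h_{i,j}\bigr)\omega,
\]
while the left-hand side is $\omega^{-1}a_{i,j}\omega$. Cancelling the outer $\omega^{\pm 1}$ yields exactly the first case of $(2\text{-}3)$. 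The same kind of computation, using $\tilde\Omega=\Omega$, handles the case $i=p-1$: the right-hand side collapses to
\[
\omega^{-1}\bigl(h_{p-1,j}^{-1}\Omega g_{0,j}^{-1}\,a_{0,j}\,(h_{p-1,j}^{-1}\Omega g_{0,j}^{-1})^{-1}\bigr)\omega,
\]
and comparison with $\omega^{-1}a_{p-1,j}\omega$ reproduces the second case of $(2\text{-}3)$.

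The ``hard part'' is not really hard at all; it is essentially conceptual rather than computational. The only point requiring care is the verification that $\tilde\Omega=\Omega$, which uses $\Omega=\omega^{p}$ and the fact that $\omega$ commutes with its own powers. Once the dictionary $\tilde h=\omega^{-1}h\omega$, $\tilde g=\omega^{-1}g\omega$, $\tilde\Omega=\Omega$ is in hand, the two sets of relations are related by a global conjugation by $\omega$, hence define the same normal subgroup in the free group of generators; so the quotient groups are identical. This completes the proposed proof.
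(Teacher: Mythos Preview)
Your proof is correct and follows essentially the same approach as the paper: both derive the dictionary $\tilde h_{i,j}=\omega^{-1}h_{i,j}\omega$, $\tilde g_{i,j}=\omega^{-1}g_{i,j}\omega$ from $d_{i,j}=\omega^{-1}a_{i,j}\omega$, substitute into $(2\text{-}3)'$, and cancel the outer $\omega^{\pm 1}$ to recover $(2\text{-}3)$. Your treatment is slightly more explicit in handling the $i=p-1$ case and in noting $\tilde\Omega=\Omega$ (the paper simply says ``the argument is the same''), but the content is identical.
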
 
\begin{proof}
First we consider the relation
 $d_{i,j}=(\tilde{g}_{i+1,j}\tilde{h}_{i,j})^{-1}\, d_{i+1,j}\,
          \tilde{g}_{i+1,j}\tilde{h}_{i,j}$ in 
 %$d_{i,j}=\omega^{p-1}d_{i+1,j}\omega^{-(p-1)}$.
%This can be translated as follows.
%\[
% d_{i,j}=\omega^{-1}a_{i,j}\omega=\omega^{p-1}(\omega^{-1}a_{i,j}\omega)\omega^{-(p-1)}=\ta%u^{p-2}a_{i,j}\omega^{-(p-2)}
%\] 
%which implies 
%$(1{\text{-}}2)$.
%For
  $(2{\text{-}}3)'$. 
%We put $\tilde{h}_{i,j}:=d_{i,1}\cdots d_{i,j-1}$ and
%       $\tilde{g}_{i,j}:=d_{i,j+1}\cdots d_{i,q}$.
By the relation (\ref{a2}), we have
 \begin{eqnarray*}
   \tilde{h}_{i,j}=\omega^{-1}h_{i,j}\omega,\quad
   \tilde{g}_{i,j}=\omega^{-1}g_{i,j}\omega.
  \end{eqnarray*}
Thus $d_{i,j}=(\tilde{g}_{i+1,j}\tilde{h}_{i,j})^{-1}\, d_{i+1,j}\,
          \tilde{g}_{i+1,j}\tilde{h}_{i,j}$ can be translated as follows
 \begin{eqnarray*}
  \begin{split}
 d_{i,j}=\omega^{-1}a_{i,j}\omega
   &=(\tilde{g}_{i+1,j}\tilde{h}_{i,j})^{-1}\, d_{i+1,j}\,
                     \tilde{g}_{i+1,j}\tilde{h}_{i,j}\\
   &=\left((\omega^{-1}g_{i+1,j}\omega)(\omega^{-1}h_{i,j}\omega)\right)^{-1}
           (\omega^{-1}a_{i+1,j}\omega)(\omega^{-1}g_{i+1,j}\omega)
           (\omega^{-1}h_{i,j}\omega)\\
   &=\omega^{-1}(g_{i+1,j}h_{i,j})^{-1}a_{i+1,j}g_{i+1,j}h_{i,j}\omega
  \end{split}
 \end{eqnarray*}
 which implies (2-3).
 For the relation
 $d_{i,j}=\tilde{h}_{p-1,j}^{-1}\,\tilde{\Omega}\, \tilde{g}_{0,j}^{-1}\,d_{0,j}\,
        ( \tilde{h}_{p-1,j}^{-1}\tilde{\Omega} \tilde{g}_{0,j}^{-1})^{-1}$,
the argument is the same.
This complete the proof. 
\end{proof}
%

%\begin{figure}[H]
%\centering
% {\small{\input{./d2.tex}}}
% \vspace{0.5cm}
%  \caption{The loop $\sigma_j^{(1)}$}\label{d2}
%\end{figure}

Next we consider general cases $k\ge 2$. That is,
we consider 
the monodromy relations at  $y=\gamma_j\xi^k$.
Then we take a path $L_k$
which connects $\gamma_0$ and $\gamma_0\xi^k$:
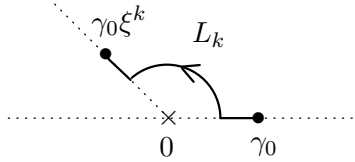
\begin{figure}[H]
\centering
%WinTpicVersion3.08
\unitlength 0.1in
\begin{picture}( 19.4000,  6.6400)(  4.6000,-16.6500)
% LINE 2 2 3 0
% 2 600 1600 2400 1600
% 
\special{pn 8}%
\special{pa 600 1600}%
\special{pa 2400 1600}%
\special{dt 0.045}%
% CIRCLE 1 0 3 0
% 4 1430 1600 1710 1600 2110 1600 910 1000
% 
\special{pn 13}%
\special{ar 1430 1600 280 280  3.9982983 6.2831853}%
% LINE 1 0 3 0
% 2 1710 1600 1910 1600
% 
\special{pn 13}%
\special{pa 1710 1600}%
\special{pa 1910 1600}%
\special{fp}%
% LINE 1 0 3 0
% 2 1240 1400 1094 1262
% 
\special{pn 13}%
\special{pa 1240 1400}%
\special{pa 1094 1262}%
\special{fp}%
% STR 2 0 3 0
% 3 1940 1650 1940 1750 5 0
% $\gamma_0$
\put(19.4000,-17.5000){\makebox(0,0){$\gamma_0$}}%
% STR 2 0 3 0
% 3 1910 1500 1910 1600 5 0
% $\bullet$
\put(19.1000,-16.0000){\makebox(0,0){$\bullet$}}%
% STR 2 0 3 0
% 3 1110 1170 1110 1270 5 0
% $\bullet$
\put(11.1000,-12.7000){\makebox(0,0){$\bullet$}}%
% STR 2 0 3 0
% 3 1180 990 1180 1090 5 0
% $\gamma_0 \xi^k$
\put(11.8000,-10.9000){\makebox(0,0){$\gamma_0 \xi^k$}}%
% STR 2 0 3 0
% 3 1650 1060 1650 1160 5 0
% $L_k$
\put(16.5000,-11.6000){\makebox(0,0){$L_k$}}%
% STR 2 0 3 0
% 3 1430 1650 1430 1750 5 0
% $0$
\put(14.3000,-17.5000){\makebox(0,0){$0$}}%
% STR 2 0 3 0
% 3 1440 1500 1440 1600 5 0
% $\times$
\put(14.4000,-16.0000){\makebox(0,0){$\times$}}%
% LINE 1 0 3 0
% 2 1510 1340 1570 1410
% 
\special{pn 13}%
\special{pa 1510 1340}%
\special{pa 1570 1410}%
\special{fp}%
% LINE 1 0 3 0
% 2 1510 1330 1601 1321
% 
\special{pn 13}%
\special{pa 1510 1330}%
\special{pa 1602 1322}%
\special{fp}%
% LINE 2 2 3 0
% 2 1442 1601 842 1001
% 
\special{pn 8}%
\special{pa 1442 1602}%
\special{pa 842 1002}%
\special{dt 0.045}%
\end{picture}%
\vspace{0.5cm}
 \caption{The loop $L_k$}
\end{figure}
\noindent
By the exact same   arguments as in the case $k=1$,
 we see   that no new monodromy relations are necessary.

\subsection{The group structures of $\pi_1(\Bbb{C}^2\setminus C)$ and $\pi_1(\Bbb{P}^2\setminus C)$}  
In this section,
we consider the group structures of $\pi_1(\Bbb{P}^2-C)$ and
$\pi_1(\Bbb{C}^2\setminus C)$.
First %we recall that the monodromy relations which is obtained 
by
previous  considerations,
we have prove that 
\begin{eqnarray}
\pi_1(\Bbb{C}^2\setminus C)=\langle \omega,\, a_{i,j},i=0,\dots,p-1,j=1,\dots,q,
 \,|\, (1{\text{-}}2), (2{\text{-}}3), (S)
 \rangle
\end{eqnarray}
where 
\begin{equation*}
 \begin{split}
(1{\text{-}}2)&\quad a_{i,j}=
\begin{cases}
   \omega^{p-1}a_{i+1,j}\omega^{-(p-1)} & 0\le i\le p-2,\\
   \omega^{2p-1}a_{0,j}(\omega^{2p-1})^{-1} &
  i= p-1,
\end{cases},\quad j=1,\dots,q \\
(2{\text{-}}3)&\quad
a_{i,j}=
       \begin{cases}
       ( g_{i+1,j}h_{i,j})^{-1}\, a_{i+1,j}\, g_{i+1,j}h_{i,j}  & 0\le i\le p-2\\
     h_{p-1,j}^{-1}\,\Omega\, g_{0,j}^{-1}\,a_{0,j}\,
        ( h_{p-1,j}^{-1}\Omega g_{0,j}^{-1})^{-1}
    & i = p-1
        \end{cases},\quad j=1,\dots,q\\
%\tag{S} \omega&=a_{0,1}\cdots a_{0,q}
\end{split}
%\tag{$S$} &\omega=a_{0,1}\cdots a_{0,q}
\end{equation*}
\[
 \tag{$S$} \omega=a_{0,1}\cdots a_{0,q}.
\]
Note that last relations in $(1\text{-}2)$ and $(2\text{-}3)$
are unnecessary as they  follow from 
previous relations.  
By the definitions of $g_{i,j}$ and $h_{i,j}$ and $(1\text{-}2)$,
we have the following inductive relations.
\begin{eqnarray}
\begin{cases}
 h_{i+1,j}=\omega^{-(p-1)}h_{i,j}\omega^{p-1},\quad
  g_{i+1,j}=\omega^{-(p-1)}g_{i,j}\omega^{p-1}\\
 g_{i+1,j}h_{i,j}=\omega^{-(p-1)}g_{i,j}\omega^{p-1} h_{i,j}
 \end{cases}
\end{eqnarray}
First we examine the relation
$(2\text{-}3)$ for a fixed $i\le p-2$ using $(1\text{-}2)$.  
The case $j=1$ gives the equality:
\[
  \begin{split}
a_{i,1}&= ( g_{i+1,1})^{-1}\, a_{i+1,1}\, g_{i+1,1},\qquad\qquad \text{as}\,\,
h_{i,1}=e\\
  &=(\omega^{-(p-1)}g_{i,1}\omega^{p-1} )^{-1}
(\omega^{-(p-1)}a_{i,1}\omega^{p-1})
 (\omega^{-(p-1)}g_{i,1}\omega^{p-1} )\\
&=\omega^{-(p-1)}g_{i,1}^{-1}
  a_{i,1}
  g_{i,1}\omega^{p-1}\\
&=\omega^{-p}a_{i,1}\omega^p.
\end{split}
\]
This implies $\omega^p$ and $a_{i,1}$ commute.
Now by the induction on $j$,
we show  that 
\[
 %\begin{eqnarray*}
\tag{$R_i$} [a_{i,j},\omega^p]=e, \qquad j=1,\dots, q
\]
%\end{eqnarray*}
 where 
$[a,b]=aba^{-1}b^{-1}$.
\begin{proof}
In fact, assuming   $a_{i,1},\dots, a_{i,j-1}$ commute with $\omega^p$,
we get 
    \[
  \begin{split}
a_{i,j}&= ( g_{i+1,j}h_{i,j})^{-1}\, a_{i+1,j}\, 
 g_{i+1,j}h_{i,j}\\
  &=(\omega^{-(p-1)}g_{i,j}\omega^{p-1}h_{i,j} )^{-1}
(\omega^{-(p-1)}a_{i,j}\omega^{p-1})
 (\omega^{-(p-1)}g_{i,j}\omega^{p-1}h_{i,j})\\
&=h_{i,j}^{-1}\omega^{-(p-1)}g_{i,j}^{-1}
  a_{i,j}
  g_{i,j}\omega^{p-1}h_{i,j}\\
&=h_{i,j}^{-1}\omega^{-(p-1)}g_{i,j}^{-1}
  (h_{i,j}^{-1}h_{i,j})a_{i,j}
  g_{i,j}\omega^{p-1}h_{i,j}\\
&=h_{i,j}^{-1}\omega^{-p}h_{i,j}a_{i,j}
  h_{i,j}^{-1}\omega^{p}h_{i,j},\hspace{3cm} \text{as}\ [h_{i,j}, \omega^p ]=e \\
&=\omega^{-p}a_{i,j}\omega^p.
\end{split}
\]
 Thus we get $[a_{i,j},\omega^p]=e$
for all $j=1,\dots,q$.
\end{proof}
The relation $(R_i)$ for $i=0,\dots,p-1$ implies 
$\omega^p$ is in the center of $\pi_1(\Bbb{C}^2\setminus C)$.
Using relations $(1\text{-}2)$ and $(R_i)$, we have
 \begin{equation*}
 \begin{split}
 a_{i+1,j} =\omega a_{i,j}\omega^{-1},
  \quad i=0, \dots, p-2,\ \
        j=1,\dots,  q.  
 \end{split}
\end{equation*}
Thus we get 
\begin{eqnarray}
a_{i,j}=\omega^{i}a_{0,j}\omega^{-i},
\quad i=0, \dots, p-1,\ \
        j=1,\dots,  q.  
\end{eqnarray}
Hence we can take
$a_{0,1},\dots,a_{0,q}$ as generators.
They satisfy the relations
 \[
\tag{$R_0$}
[a_{0,j},\omega^p]=e,\quad j=1,\dots,q. 
 \]
%where 
%\[
% \tag{$S$}
% \omega=a_{0,1}\cdots a_{0,q}.
%\]
It is easy to see (and we have seen implicitly in the above
discussions) that the relations $(1\text{-}2)$ and $(2\text{-}3)$ follow 
from $(R_0)$, $(S)$ and $(5)$.  
Thus  we have shown  
\[
\begin{split}
 \pi_1(\Bbb{C}^2\setminus C)&=\langle
                       a_{i,j}\,(i=0, \dots, p-1,
        j=1,\dots,  q),  \omega \,|\, (1{\text{-}1}),\
                       (2{\text{-}}3),\
                     (S),\ (5)
                    \rangle\\
                &  \cong
 \langle
                       a_{0,1},\dots, a_{0,q}, \omega \,|\,
 (R_0),\ (S)  \rangle\\
 \pi_1(\Bbb{P}^2\setminus C)& \cong
              \langle
                    a_{0,1},\dots, a_{0,q}, \omega \,|\,\omega^p=e,\
(R_0),\ (S) 
                    \rangle\\
& \cong    \langle
                     a_{0,1},\dots, a_{0,q}, \omega \,|\,\omega^p=e,\ (S)
                     \rangle\\
 &\cong \langle  a_{0,1},\dots, a_{0,q-1}, \omega \,|\,\omega^p=e  \rangle\\
&\cong F(q-1)* \Bbb{Z}/p\Bbb{Z}.
\end{split}
\]
This completes the proof of Theorem 1.

I would like to express my deepest gratitude to 
Professor Mutsuo Oka for
suggesting me the present problem and helping me to prepare this paper.

\def\cprime{$'$} \def\cprime{$'$} \def\cprime{$'$} \def\cprime{$'$}
  \def\cprime{$'$}

%\bibliographystyle{abbrv}
%\bibliography{./Kawasimabib}

\end{document}